\documentclass[twoside]{amsart}
\usepackage{amsmath,amsfonts,amsthm,mathrsfs,amscd}
\usepackage[dpi=1270,PostScript=dvips,heads=LaTeX]{diagrams}
\diagramstyle[labelstyle=\scriptstyle,small,thin]

\usepackage[unicode]{hyperref}
\usepackage{enumitem}
\usepackage[numeric,initials]{amsrefs}




\newcommand{\eps}{\varepsilon}

\newcommand{\pair}[2]{\bigl\langle #1, #2 \bigr\rangle}

\newcommand{\tensor}{\otimes}



\newcommand{\Hom}[3][]{\operatorname{Hom}_{#1}(#2,#3)}
\newcommand{\Ext}[4][]{\operatorname{Ext}_{#1}^{#2}(#3,#4)}

\newcommand{\SSn}[1]{\mathbb{S}^{#1}}


\newcommand{\ZZ}{\mathbb{Z}}
\newcommand{\QQ}{\mathbb{Q}}
\newcommand{\RR}{\mathbb{R}}
\newcommand{\CC}{\mathbb{C}}

\newcommand{\PPn}[1]{\mathbb{P}^{#1}}


\newcommand{\menge}[2]{\bigl\{ \thinspace #1 \thinspace\thinspace \big\vert%
\thinspace\thinspace #2 \thinspace \bigr\}}



\newtheorem*{lem*}{Lemma}

\newtheorem*{thm*}{Theorem}
\newtheorem*{prop*}{Proposition}

\theoremstyle{definition}

\theoremstyle{remark}

\newtheorem*{ex*}{Example}

\theoremstyle{plain}

\DeclareMathOperator{\im}{im}

\DeclareMathOperator{\coker}{coker}

\DeclareMathOperator{\id}{id}


\newcommand{\define}[1]{\emph{#1}}






\newcommand{\transp}[1]{{#1}^{\dagger}}
\newcommand{\restr}[2]{#1\big\vert_{#2}}

\newcommand{\argbl}{\,\_\!\_\!\_\,}


\def\overbar#1#2#3{{%
	\setbox0=\hbox{\displaystyle{#1}}%
	\dimen0=\wd0
	\advance\dimen0 by -#2 
	\vbox {\nointerlineskip \moveright #3 \vbox{\hrule height 0.3pt width \dimen0}%
		\nointerlineskip \vskip 1.5pt \box0}%
}}


\newarrow{Equal}   =====
\newarrow{IntoBold}   {boldhook}---> 

\newtheorem{theorem}{Theorem}
\newtheorem{lemma}[theorem]{Lemma}
\newtheorem*{lemma*}{Lemma}
\newtheorem*{lefschetz-theorem}{Lefschetz Hyperplane Theorem}
\newtheorem*{theorem*}{Theorem}

\newtheorem{proposition}[theorem]{Proposition}
\theoremstyle{definition}

\theoremstyle{remark}
\newtheorem*{remark}{Remark}
\newtheorem*{example}{Example}

\newtheorem*{question*}{Question}

\theoremstyle{plain}

\newcommand{\prim}{_{\mathit{prim}}}
\newcommand{\van}{_{\mathit{van}}}

\newcommand{\famX}{\mathfrak{X}}
\newcommand{\famXsm}{\mathfrak{X}^{\mathit{sm}}}
\newcommand{\Psm}{P^{\mathit{sm}}}
\newcommand{\pism}{\pi^{\mathit{sm}}}
\newcommand{\pisml}{\pi_{\ast}^{\mathit{sm}}}
\newcommand{\unitint}{\lbrack 0, 1 \rbrack}

\DeclareMathOperator{\Alg}{Alg}

\renewcommand{\pair}[2]{B(#1, #2)}
\newcommand{\Pair}[2]{B \bigl( #1, #2 \bigr)}
\DeclareMathOperator{\Aut}{Aut}
\DeclareMathOperator{\Sp}{Sp}
\DeclareMathOperator{\Spsh}{Sp^{\sharp}}
\DeclareMathOperator{\Sptsh}{Sp_2^{\sharp}}

\renewcommand{\Ext}{\operatorname{Ext}}
\renewcommand{\Hom}{\operatorname{Hom}}

\newcommand{\il}{i_{\ast}}
\newcommand{\fl}{f_{\ast}}
\newcommand{\iu}{i^{\ast}}
\newcommand{\qu}{q^{\ast}}

\newcommand{\Phiu}{\Phi^{\ast}}

\newcommand{\itu}{i_t^{\ast}}

\renewcommand{\argbl}{-}
\newcommand{\locsys}{\mathcal{M}}
\newcommand{\dualX}{X^{\vee}}

\newcommand{\gZZ}{g^{\ZZ}}
\newcommand{\Nab}{N^{\mathit{ab}}}

\begin{document}

\title{Primitive cohomology and the tube mapping}
\author[C.~Schnell]{Christian Schnell}
\address{Department of Mathematics, Statistics \& Computer Science \\
University of Illinois at Chicago \\
851 South Morgan Street \\
Chicago, IL 60607}
\email{cschnell@math.uic.edu}

\subjclass[2000]{14F45; 20J06; 14D05}
\keywords{Primitive cohomology, Lefschetz pencil, Monodromy, Group cohomology}

\begin{abstract}
Let $X$ be a smooth complex projective variety of dimension $d$. We show that its
primitive cohomology in degree $d$ is generated by certain ``tube classes,''
constructed from the monodromy in the family of all hyperplane sections of $X$. The proof
makes use of a result about the group cohomology of certain representations that
may be of independent interest.
\end{abstract}
\maketitle

\section{Introduction}
\label{sec:introduction}

Let $X$ be a complex projective manifold.  When $X$ is embedded into
projective space, there is a close relationship between the cohomology of $X$
and that of any smooth hyperplane section $S = X \cap H$; this is the content of the
Lefschetz Hyperplane Theorem. In fact, the only piece of the cohomology of $X$ that
cannot be inferred from that of $S$ is the \define{primitive cohomology} in degree $d = \dim
X$, 
\[
	H^d(X, \QQ) \prim 
		= \ker \bigl( H^d(X, \QQ) \to H^d(S, \QQ) \bigr),
\]
which consists of those $d$-th cohomology classes on $X$ that restrict to zero on any
smooth hyperplane section.  Similarly, all the cohomology of $S$ is 
determined by that of $X$, except for the \define{vanishing cohomology}
\[
	H^{d-1}(S, \QQ) \van = 
		\ker \bigl( H^{d-1}(S, \QQ) \to H^{d+1}(X, \QQ) \bigr).
\]
It takes its name from the fact that it corresponds, under Poincar\'e duality, to the
space of homology classes in $H_{d-1}(S, \QQ)$ that vanish when mapped to $H_{d-1}(X, \QQ)$.

The content of this paper is that the primitive cohomology of $X$ can be obtained, at
least topologically, from the family of all smooth hyperplane sections, in a way that
we now describe. Hyperplane sections of $X$ (for the given embedding) are naturally
parametrized by a projective space $P$, and smooth hyperplane sections by a
Zariski-open subset $\Psm$. As is well-known, any two smooth hyperplane sections are
isomorphic as smooth manifolds, making it possible to transport homology classes
among nearby ones. Since $\Psm$ is typically not simply connected, this leads to a
\define{monodromy action} of its fundamental group $G$ on the homology $H_{\ast}(S_0,
\QQ)$ of any given smooth hyperplane section $S_0$.

The flat transport of homology classes can also be used to produce elements of
$H_d(X, \QQ)$. Namely, suppose a homology class $\alpha \in H_{d-1}(S_0, \QQ)$ is
invariant under the action of some element $g \in G$. When $\alpha$ is transported
along a closed path representing $g$, it moves through a one-dimensional family of
hyperplane sections, and in the process, traces out a $d$-chain on $X$. This
$d$-chain is actually a $d$-cycle, because $g \cdot \alpha = \alpha$. Taking the
ambiguities in this construction into account, we get a well-defined element of
$H_d(X, \QQ) / H_d(S_0, \QQ)$; we shall call it the \define{tube class}
determined by $g$ and $\alpha$. 

Under Poincar\'e duality, the quotient $H_d(X, \QQ) / H_d(S_0, \QQ)$ is
isomorphic to the primitive cohomology of $X$; the above construction thus gives us
the \define{tube mapping}
\[
	\bigoplus_{g \in G} 
		\menge{\alpha \in H^{d-1}(S_0, \QQ) \van }{g \cdot \alpha = \alpha}
		\to H^d(X, \QQ) \prim.
\]
The main result of the paper is that the tube mapping is \emph{surjective}, provided that
the necessary condition $H^{d-1}(S_0, \QQ) \van \neq 0$ is satisfied. This gives a
positive answer to a question by H.~Clemens. When the
dimension of $X$ is odd, the condition holds for essentially any embedding of $X$
into projective space; when $d$ is even, it holds as long as the degree of the
embedding is sufficiently high. Evidently, this situation can always be achieved by composing
with a suitable Veronese map.

In the remainder of the introduction, we review the Lefschetz theorems, which
describe the relationship between the cohomology of $X$ and that of a smooth hyperplane
section.  We then give a more careful definition of the tube mapping, and state the
main result in Theorem~\ref{thm:tube-main}.

\subsection*{Review of the Lefschetz theorems}

A comprehensive discussion of the relationship between the cohomology of $X$ and that
of a smooth hyperplane section $S = X \cap H$ can be found, for instance, in the book
by C.~Voisin \cite{Voisin}*{Section~13}. We only give a very brief outline of
the main points. Let us write $i \colon S \to X$ for the inclusion map; we also let
$d = \dim X$ be the complex dimension of $X$. The complement $X \setminus S$ is 
a Stein manifold, and Morse theory shows that it has the homotopy type of a
$d$-dimensional CW-complex. One consequence is the following.

\begin{lefschetz-theorem}
The restriction map $\iu \colon H^k(X, \ZZ) \to H^k(S, \ZZ)$ is an isomorphism
for $k < \dim S = d - 1$, and injective for $k = d - 1$. Moreover, the quotient group
$H^{d-1}(S, \ZZ) / H^{d-1}(X, \ZZ)$ is torsion-free. 
\end{lefschetz-theorem}

Since Poincar\'e duality on $X$ (resp.\@ $S$) can be used to describe the cohomology
groups in dimensions greater than $d$ (resp.\@ $d-1$), there are only two pieces of
the cohomology rings of $X$ and $S$ that are not covered by Lefschetz' theorem. One
is the \define{primitive cohomology}
\begin{align*}
	H^d(X, \ZZ) \prim 
		&= \ker \bigl( \iu \colon H^d(X, \ZZ) \to H^d(S, \ZZ) \bigr) \\
		&= \ker \bigl( L \colon H^d(X, \ZZ) \to H^{d+2}(X, \ZZ) \bigr),
\end{align*}
where $L$ is the Lefschetz operator, given by cup product with the fundamental class
of $S$ in $H^2(X, \ZZ)$.
The other is the quotient $H^{d-1}(S, \ZZ) / H^{d-1}(X, \ZZ)$. It is related to the 
\define{vanishing cohomology} of the hypersurface
\[
	H^{d-1}(S, \ZZ) \van = 
		\ker \bigl( \il \colon H^{d-1}(S, \ZZ) \to H^{d+1}(X, \ZZ) \bigr).
\]
The vanishing cohomology is Poincar\'e dual to the kernel of $\il \colon H_{d-1}(S, \ZZ) \to
H_{d-1}(X, \ZZ)$, and it is known that the latter is generated by the vanishing
cycles of any Lefschetz pencil on $X$, thus explaining the name.

At least over $\QQ$, one has direct sum decompositions
\begin{equation} \label{eq:decompS}
	H^{d-1}(S, \QQ) = \iu H^{d-1}(X, \QQ) \oplus H^{d-1}(S, \QQ) \van
\end{equation}
and
\begin{equation} \label{eq:decompX}
	H^d(X, \QQ) = \il H^{d-2}(S, \QQ) \oplus H^d(X, \QQ) \prim,
\end{equation}
orthogonal with respect to the intersection pairings on $S$ and $X$,
respectively. This is part of the content of the so-called Hard Lefschetz Theorem
\cite{Voisin}*{Proposition~14.27 on p.~328}. With integer coefficients, the map
\[
	H^{d-1}(S, \ZZ) \van \to H^{d-1}(S, \ZZ) / H^{d-1}(X, \ZZ)
\]
is unfortunately neither injective nor surjective in general.

%

\subsection*{The tube mapping} 
\label{subsec:tube-mapping-def}

By definition, the primitive cohomology of $X$ cannot be obtained from a single
smooth hyperplane section. But as we have seen, there is a way to produce primitive
cohomology classes on $X$, using the family of all smooth hyperplane sections. We now
give a more precise description of this process.

Let $P$ be the projective space that parametrizes all hyperplane sections of $X$ inside
the ambient projective space. The incidence variety
\[
	\famX = \menge{(H, x) \in P \times X}{x \in X \cap H} \subseteq P \times X,
\]
together with the projection $\pi \colon \famX \to P$, is called the
\emph{universal hyperplane section}, since its fiber over a point $H \in P$ is precisely
$X \cap H$. Note that $\famX$ is itself a smooth and very ample hypersurface in $P
\times X$. Those hyperplanes $H$ for which $X \cap H$ is
smooth form a Zariski-open subset $\Psm \subseteq P$, and by restricting $\pi$, we
obtain the family $\pism \colon \famXsm \to \Psm$ of all smooth hyperplane sections
of $X$.

Now fix a base point $H_0 \in \Psm$, and let $S_0 = X \cap H_0$ be the corresponding
hypersurface in $X$. The fundamental group $G = \pi_1 \bigl( \Psm, H_0 \bigr)$ of
$\Psm$ then acts by monodromy on the homology $H_{d-1}(S_0, \ZZ)$ of the fiber. (In
all other degrees, the action is trivial because of the Hyperplane Theorem.)

Whenever a $(d-1)$-cycle $\alpha \in H_{d-1}(S_0, \ZZ)$ is invariant under the action of an
element $g \in G$, we can use it to produce a homology class in $H_d(X, \ZZ)$, as follows.
The element $g$ can be represented by an immersion $\SSn{1} \to \Psm$. Transporting
$\alpha$ flatly along this closed path, and taking the trace in $X$, we get a $d$-chain
$\Gamma$ whose boundary $\partial \Gamma = g \cdot \alpha - \alpha$ is zero in
homology. By virtue of the exact sequence
\begin{diagram}
	\dotsb &\rTo& H_d(S_0, \ZZ) &\rTo^{\il}& H_d(X, \ZZ) &\rTo& H_d(X, S_0, \ZZ)
		&\rTo^{\partial}& H_{d-1}(S_0, \ZZ) &\rTo& \dotsb,
\end{diagram}
we can lift $\Gamma$ to a well-defined element $\tau_g(\alpha) \in H_d(X, \ZZ) / \il
H_d(S_0, \ZZ)$. This element does not depend on which representatives are chosen for
$\alpha$ and $g$, and we get a map
\[
	\bigoplus_{g \in G} \menge{\alpha \in H_{d-1}(S_0, \ZZ)}{g \cdot \alpha = \alpha}
		\to H_d(X, \ZZ) / \il H_d(S_0, \ZZ).
\]

We can use Poincar\'e duality on $S_0$ and on $X$ to obtain a map in cohomology; but to
get primitive cohomology classes on $X$, we need to use the decomposition in
\eqref{eq:decompX}, which only works with rational coefficients. It implies a
canonical isomorphism
\[
	H^d(X, \QQ) \prim \simeq H^d(X, \QQ) / \il H^{d-2}(S_0, \QQ) 
		\simeq H_d(X, \QQ) / \il H_d(S_0, \QQ).
\]
Restricting to the vanishing cohomology of $S_0$, we now obtain
the \define{tube mapping} in its final form as
\begin{equation} \label{eq:tube-def}
	\bigoplus_{g \in G} \menge{\alpha \in H^{d-1}(S_0, \QQ) \van}{g \cdot \alpha = \alpha} 
		\to H^d(X, \QQ) \prim.
\end{equation}
The main result of this paper is that this mapping is surjective, provided the
left-hand side is nontrivial to begin with.

\begin{theorem} \label{thm:tube-main}
Let $X$ be a smooth complex projective variety of dimension $d$, with a given
embedding into projective space. As above, let $\Psm$ be the set of
hyperplanes $H$ such that $X \cap H$ is smooth.  Let $S_0 = X \cap H_0$ be the hypersurface
corresponding to some base point $H_0 \in \Psm$, and write $G = \pi_1 \bigl( \Psm, H_0
\bigr)$ for the fundamental group of $\Psm$. 
If $H^{d-1}(S_0, \QQ) \van \neq 0$, then the tube mapping in \eqref{eq:tube-def} is
surjective.
\end{theorem}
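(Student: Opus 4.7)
My approach is to reinterpret both sides of the tube mapping in terms of the local system $\shL_{\van} = R^{d-1} \pisml \QQ \van$ on $\Psm$, and to reduce surjectivity to a purely group-theoretic statement about the monodromy representation $V = H^{d-1}(S_0, \QQ) \van$ of $G = \pi_1(\Psm, H_0)$.

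First I would construct a natural injection $H^d(X, \QQ) \prim \hookrightarrow H^1(\Psm, \shL_{\van})$ as follows. The universal hyperplane section $\famXsm$ maps to $\Psm$ via $\pism$ and also to $X$ via the second projection $q$, which realizes $\famX$ as a projective bundle over $X$. Pulling back a primitive class $\beta \in H^d(X, \QQ) \prim$ along $q^\ast$ gives a class in $H^d(\famXsm, \QQ)$, and projecting onto the associated graded of the Leray filtration for $\pism$ produces an element of $E_\infty^{1, d-1} \subseteq H^1(\Psm, R^{d-1} \pisml \QQ)$. By the Lefschetz Hyperplane Theorem, $R^{d-1} \pisml \QQ$ decomposes over $\QQ$ as the constant part $\iu H^{d-1}(X, \QQ) \otimes \QQ_{\Psm}$ plus $\shL_{\van}$, and a routine verification (primitivity kills the constant part, while the other graded pieces come from non-primitive classes on $X$) shows that the resulting map from primitive cohomology to $H^1(\Psm, \shL_{\van})$ is injective.

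Second, I would interpret the tube mapping under this identification. The five-term exact sequence of the universal cover $\widetilde{\Psm} \to \Psm$ provides an injection $H^1(G, V) \hookrightarrow H^1(\Psm, \shL_{\van})$, and I would argue that the tube mapping factors as
\[
\bigoplus_{g \in G} V^g \xrightarrow{\Phi} H^1(G, V) \hookrightarrow H^1(\Psm, \shL_{\van}),
\]
where $\Phi$ sends $(g, \alpha)$ to a cocycle canonically attached to the pair; geometrically, $\Phi(g, \alpha)$ represents the class of the trace of $\alpha$ under transport along a loop representing $g$, extended to a $1$-cocycle on all of $G$ using the invariance condition $g \alpha = \alpha$. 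Under the identification from the previous step, the primitive cohomology of $X$ corresponds to the image of $H^1(G, V)$, so Theorem~\ref{thm:tube-main} reduces to showing that $\Phi$ is surjective.

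The main obstacle, and the novel content of the paper, is this group-cohomological statement. For a general $G$-representation the analogous map is not surjective — a single unipotent Jordan block already provides a counterexample — so the proof must exploit the special structure of the monodromy. The group $G$ is generated by Picard--Lefschetz transformations $T_\delta$ along vanishing cycles $\delta$ of Lefschetz pencils through $H_0$, with $T_\delta \alpha = \alpha \pm \smpair{\alpha}{\delta} \delta$ and $V^{T_\delta} = \delta^{\perp}$. I would prove surjectivity of $\Phi$ by writing an arbitrary cocycle $c \colon G \to V$ in terms of its values on such a generating set and decomposing it as a sum of cocycles each supported on a single $T_\delta$; for each elementary cocycle, I would realize the required value $c(T_\delta) \in V$ as a tube contribution by exploiting the largeness of the monodromy orbit on $V$, which is nontrivial precisely because the hypothesis $V \neq 0$ guarantees the existence of sufficiently many vanishing cycles.
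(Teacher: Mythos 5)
Your overall frame (pass through $H^1\bigl(\Psm,(R^{d-1}\pisml\QQ)\van\bigr)\simeq H^1(G,V_{\QQ})$ and exploit the Picard--Lefschetz structure of the monodromy) is the right one, but the central reduction is flawed in two ways. First, your map $\Phi\colon\bigoplus_g V_{\QQ}^g\to H^1(G,V_{\QQ})$ is not well defined: there is no canonical (indeed, in general no) way to extend a prescribed value at $g$ to a $1$-cocycle on all of $G$; the restriction $H^1(G,V_{\QQ})\to H^1(g^{\ZZ},V_{\QQ})\simeq V_{\QQ}/(g-\id)V_{\QQ}$ has no natural splitting, and a transfer in the other direction only exists for finite-index subgroups. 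The honest relationship between tube classes and group cohomology is a \emph{pairing}: $(g,\alpha)$ with $g\alpha=\alpha$ pairs with a cocycle class $[\phi]$ via $B(\phi(g),\alpha)$. So one must dualize, i.e.\ prove injectivity of the composite $H^d(X,\QQ)\prim\to H^1(G,V_{\QQ})\to\prod_g V_{\QQ}/(g-\id)V_{\QQ}$ as in \eqref{eq:tube-dual}, and then \emph{verify} that this composite is adjoint to the tube mapping with respect to the intersection pairings --- a nontrivial compatibility (a Stokes-type computation as in Lemma~\ref{lem:tube-S1}) that your proposal never addresses. Second, your identification ``$H^d(X,\QQ)\prim$ corresponds to the image of $H^1(G,V_{\QQ})$'' is wrong in general: $H^1(G,V_{\QQ})\simeq H^1\bigl(\Psm,(R^{d-1}\pisml\QQ)\van\bigr)$ by Lemma~\ref{lem:coh-locsys}, and primitive cohomology is in general only a \emph{subspace} of this group (equality needs Nori's connectivity theorem and a high-degree embedding, cf.\ Proposition~\ref{prop:Nori}); moreover the injectivity of \eqref{eq:step1} is not a ``routine verification'' --- it requires the Lefschetz-pencil argument (triviality of restrictions to the nodal fibers, nondegeneracy of the cup-product pairing on $H^d(X,\QQ)\prim$, Zariski's theorem), and this is precisely where the hypothesis $H^{d-1}(S_0,\QQ)\van\neq 0$ enters.

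The remaining, and hardest, ingredient is also not supplied by your sketch. What is actually needed is injectivity of the restriction map $H^1(G,V_{\QQ})\to\prod_g V_{\QQ}/(g-\id)V_{\QQ}$ for the monodromy representation (your unipotent example correctly shows this fails for general $G$-modules). ``Decomposing a cocycle as a sum of cocycles each supported on a single $T_\delta$'' is not possible in general, since the values on generators are constrained by the relations of $G$, and ``largeness of the monodromy orbit'' over $\QQ$ is not enough: when $d$ is even the argument genuinely uses the integral structure --- $H^{d-1}(S_0,\ZZ)\van$ modulo torsion is a skew-symmetric vanishing lattice, and one needs Janssen's theorem (Theorem~\ref{thm:Janssen}) to produce a finite-index subgroup generated by transvections along $r$ linearly independent vanishing cycles (Lemma~\ref{lem:finite-index}, Proposition~\ref{prop:group-even}), together with the existence of two vanishing cycles of intersection number one. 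The odd case is a separate, elementary but genuinely different computation using the symmetric form with $(\delta,\delta)=\pm 2$. As written, your proposal asserts the conclusion of this step rather than proving it.
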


\section{An application of the result to Calabi-Yau threefolds}

A concrete interpretation of Theorem~\ref{thm:tube-main} is as follows. Consider the
\'etale space $T \van$ of the local system on $\Psm$, whose fiber over a point corresponding
to the hyperplane section $S = X \cap H$ is the group $H^{d-1}(S, \ZZ) \van$. Points
of $T \van$ can naturally be viewed as pairs $(S, \alpha)$, where $S \subseteq X$ is
a smooth hyperplane section, and $\alpha \in H^{d-1}(S, \ZZ) \van$. Then
$T \van$ is an analytic covering space of $\Psm$, with countably many sheets and
countably many connected components. If we let $T \van (\alpha)$ be the component
containing the point $(S_0, \alpha)$, it is easy to see that
\[
	\pi_1 \bigl( T \van (\alpha), (S_0, \alpha) \bigr) =
		\menge{g \in G}{g \cdot \alpha = \alpha}.
\]

Now let $\omega \in H^d(X, \QQ) \prim$ be a nonzero primitive cohomology class. We can use
the tube mapping to construct from $\omega$ a first cohomology class on $T \van (\alpha)$.
Indeed, the rule
\[
	\menge{g \in G}{g \cdot \alpha = \alpha} \to \QQ, \qquad
		g \mapsto \int_{\tau_g(\alpha)} \omega,
\]
defines a homomorphism from the fundamental group of $T \van (\alpha)$ to $\QQ$. By
virtue of Hurewicz' theorem, it corresponds to a class in $H^1 \bigl( T \van (\alpha), \QQ
\bigr)$. It is not hard to show that this class is independent of the choice of base
point on $T \van (\alpha)$. Thus we have a well-defined map
\[
	F \colon H^d(X, \QQ) \prim \to H^1 \bigl( T \van, \QQ \bigr).
\]
Theorem~\ref{thm:tube-main} is the assertion that this map is injective.
In other words, as predicted by Clemens, the topology of the complex manifold $T \van$ is
sufficiently complicated to detect primitive cohomology classes on $X$.

The result has an interesting consequence for the study of Hodge loci on Calabi-Yau
threefolds. Let $X$ be a Calabi-Yau threefold, and let $\omega \in H^0 \bigl( X,
\Omega_X^3 \bigr)$ be a nowhere vanishing holomorphic three-form. Clemens
\cite{Clemens-CY} has shown that the locus of Hodge classes
\[
	\Alg \bigl( T \van \bigr) = 
		\menge{(S, \alpha) \in T \van}
			{\text{$\alpha \in H^{1,1}(S) \cap H^2(S, \ZZ) \van$}}
\]
is the zero locus of a closed holomorphic $1$-form $\Omega$ on $T$, constructed from
$\omega$.  Local integrals of $\Omega$ are referred to as \define{potential functions}
in \cite{Clemens-CY}; as in many other situations, the points in $T$ corresponding to
geometric objects (curves on $X$) are therefore given as the critical locus of these
potential functions.

From the construction in \cite{Clemens-CY}*{p.~735}, it is easy to see that 
\[
	\lbrack \Omega \rbrack = F(\omega) \in H^1 \bigl( T \van, \CC \bigr).
\]
Since the map $F$ is injective by Theorem~\ref{thm:tube-main}, it follows that the
$1$-form $\Omega$ is not exact. In particular, there is no globally defined potential
function on all of $T \van$.

\section{Proof of the main theorem}
\label{sec:proof}

We now describe the proof of Theorem~\ref{thm:tube-main}, referring to later sections
for some of the details.

\subsection*{Dual formulation}

Generally speaking, it is easier to prove that a map is injective than to prove that
it is surjective. With this in mind, we consider the mapping dual to
\eqref{eq:tube-def}. Under the intersection pairing
on $S_0$, the space $V_{\QQ} = H^{d-1}(S_0, \QQ) \van$ is self-dual. Since the
intersection pairing is $G$-invariant, we then have
\[
	\Hom_{\QQ} \bigl( \ker (g - \id), \QQ \bigr) 
	\simeq \coker (g - \id)
	\simeq V_{\QQ} / (g - \id) V_{\QQ}
\]
for any element $g \in G$ acting on $V_{\QQ}$. Similarly, we get
\[
	\Hom_{\QQ} \bigl( H^d(X, \QQ) \prim, \QQ \bigr) \simeq
		H^d(X, \QQ) \prim
\]
by using the intersection pairing on $X$. The dual of the tube mapping is therefore 
\begin{equation} \label{eq:tube-dual}
	H^d(X, \QQ) \prim \to 
		\prod_{g \in G} V_{\QQ} / (g - \id) V_{\QQ}.
\end{equation}
To prove that the tube mapping is surjective, it suffices to show that
\eqref{eq:tube-dual} is injective.

The main advantage to this point of view is that the map \eqref{eq:tube-dual} can
naturally be factored into three simpler maps. We now discuss each of the three in
turn.

\subsection*{The first map}

The first step is to look at the topology of the family of all smooth hyperplane
sections $\pism \colon \famXsm \to \Psm$. From the projection $\famXsm \to X$, we
have a pullback map $H^d(X, \QQ) \to H^d \bigl( \famXsm, \QQ)$.
Now consider the Leray spectral sequence for $\pism$, whose $E_2$-page is
\[
	E_2^{p,q} = H^p \bigl( \Psm, R^q \pisml \QQ \bigr) \Longrightarrow
		H^{p+q} \bigl( \Psm, \QQ \bigr).
\]
Here $R^q \pisml \QQ$ is the local system on $\Psm$ with fiber $H^q(S_0, \QQ)$. The
spectral sequence degenerates at $E_2$ by Deligne's theorem \cite{Voisin}*{p.~379},
because $\pism$ is smooth and projective. Letting $L^{\bullet} H^d \bigl(
\famXsm, \QQ \bigr)$ be the induced filtration on the cohomology of $\famXsm$, we see
in particular that
\[
	H^d \bigl( \famXsm, \QQ \bigr) / L^1 H^d \bigl( \famXsm, \QQ \bigr) \simeq
		E_2^{0,d} = H^0 \bigl( \Psm, R^d \pisml \QQ \bigr)
\]
and
\[
	L^1 H^d \bigl( \famXsm, \QQ \bigr) / L^2 H^d \bigl( \famXsm, \QQ \bigr)
		\simeq E_2^{1,d} = H^1 \bigl( \Psm, R^{d-1} \pisml \QQ \bigr).
\]
By definition, primitive cohomology classes on $X$ restrict to zero on every fiber of
$\pism$, and therefore go to zero in $E_2^{0,d}$. This means that $H^d(X, \QQ) \prim$
is mapped into $L^1 H^d \bigl( \famXsm, \QQ \bigr)$. Composing with the projection to
$E_2^{1,d}$, we obtain a map
\[
	H^d(X, \QQ) \prim \to H^1 \bigl( \Psm, R^{d-1} \pisml \QQ \bigr).
\]
From the decomposition in \eqref{eq:decompS}, we have $R^{d-1} \pisml \QQ =
H^{d-1}(X, \QQ) \oplus ( R^{d-1} \pisml \QQ ) \van$, where the first summand is constant.
Noting that $H^1 \bigl( \Psm, \QQ \bigr) = 0$ because $P \setminus \Psm$ is
irreducible, we find that
\[
	H^1 \bigl( \Psm, R^{d-1} \pisml \QQ \bigr) \simeq 
		H^1 \bigl( \Psm, (R^{d-1} \pisml \QQ) \van \bigr),
\]
and so we obtain the first map in its final form as
\begin{equation} \label{eq:step1}
	H^d(X, \QQ) \prim \to H^1 \bigl( \Psm, (R^{d-1} \pisml \QQ) \van \bigr).
\end{equation}

We shall prove in Section~\ref{sec:Lefschetz} that \eqref{eq:step1} is injective,
provided $H^{d-1}(S_0, \QQ) \van \neq 0$. This is a simple consequence of the
topology of Lefschetz pencils on $X$. 

\subsection*{The second map}

The second step is to represent the cohomology of the local system $(R^{d-1} \pisml
\QQ) \van$ by group cohomology. The group in question is, of course, the fundamental group
$G = \pi_1 \bigl( \Psm, H_0 \bigr)$, which acts on $H^{d-1}(S_0, \QQ) \van$ through monodromy.

In general, given a group $G$ and a $G$-module $M$, the $i$-th group cohomology is
defined as
\[
	H^i(G, M) = \Ext_{\ZZ G}^i (\ZZ, M)
\]
in the category of $\ZZ G$-modules \cite{Weibel}*{p.~161}. In particular, $H^0(G, M)
= M^G$ is the submodule of $G$-invariant elements. The first cohomology $H^1(G, M)$,
which is all we shall use, can be described explicitly as a quotient $Z^1(G, M)
/ B^1(G, M)$, where
\[
	Z^1(G, M) = \menge{\phi \colon G \to M}{\text{$\phi(gh) = g \cdot \phi(h) + \phi(g)$
		for all $g, h \in G$}}
\]
is the group of $1$-cocyles, and
\[
	B^1(G, M) = \menge{\phi \colon G \to M}{\text{there is $x \in M$ with 
		$\phi(g) = g \cdot x - x$ for all $g \in G$}}
\]
the group of $1$-coboundaries for $M$.

There is a well-known correspondence between local systems and representations of the
fundamental group \cite{Voisin}*{Corollaire~15.10 on p.~339}. The following lemma
explains the relationship between the cohomology of the local system and the group
cohomology of the representation.

\begin{lemma} \label{lem:coh-locsys}
Let $\locsys$ be a local system on a connected topological space $B$. Let $M$
be its fiber at some point $b_0 \in B$; it is the representation of the fundamental
group $G = \pi_1(B, b_0)$ corresponding to $\locsys$. Assume that $B$ has a universal
covering space $\tilde{B} \to B$. Then there is a convergent spectral sequence
\[
	E_2^{p,q} = \Ext_{\ZZ G}^p \bigl( H_q(\tilde{B}, \ZZ), M \bigr) 
		\Longrightarrow H^{p+q}(B, \locsys).
\]
In particular, we have $H^1(B, \locsys) \simeq H^1(G, M)$.
\end{lemma}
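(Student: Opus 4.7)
The plan is to compute $H^*(B, \locsys)$ as the cohomology of an explicit complex of $\ZZ G$-modules built from the universal cover, and then feed that complex into a standard hyperext spectral sequence.

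First, I would use the fact that $\tilde B \to B$ is a Galois cover with deck group $G$ acting freely, so the singular chain complex $C_\bullet(\tilde B, \ZZ)$ is a bounded-below chain complex of free $\ZZ G$-modules (singular simplices in $\tilde B$ form a free $G$-set whose orbits are the singular simplices of $B$). Pulling $\locsys$ back to $\tilde B$ trivializes it to the constant sheaf with stalk $M$, and the classical identification of $\locsys$-valued singular cochains on $B$ with $G$-equivariant $M$-valued cochains on $\tilde B$ yields
\[
H^n(B, \locsys) \;\cong\; H^n \bigl( \Hom_{\ZZ G}(C_\bullet(\tilde B, \ZZ), M) \bigr).
\]

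Second, I would apply the hyperext spectral sequence to the chain complex $P_\bullet = C_\bullet(\tilde B, \ZZ)$. Choose an injective resolution $M \to I^\bullet$ in $\ZZ G$-modules and form the double complex $\Hom_{\ZZ G}(P_q, I^p)$. The filtration by rows collapses on the $E_1$-page because each $P_q$ is projective, so the total complex computes $H^*(\Hom_{\ZZ G}(P_\bullet, M))$, which by the previous step is $H^*(B, \locsys)$. A Cartan--Eilenberg resolution of $P_\bullet$ then produces the other spectral sequence
\[
E_2^{p,q} = \Ext^p_{\ZZ G}\bigl( H_q(P_\bullet), M \bigr) \;\Longrightarrow\; H^{p+q}\bigl( \Hom_{\ZZ G}(P_\bullet, M) \bigr),
\]
which converges because $P_\bullet$ is concentrated in nonnegative degrees. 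Since $H_q(P_\bullet) = H_q(\tilde B, \ZZ)$, this is precisely the spectral sequence claimed in the lemma.

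For the final assertion, I would simply inspect the low-degree terms. Because $\tilde B$ is simply connected, Hurewicz gives $H_1(\tilde B, \ZZ) = 0$, and $H_0(\tilde B, \ZZ) = \ZZ$ with trivial $G$-action. Thus $E_2^{0,1} = 0$ and $E_2^{1,0} = \Ext^1_{\ZZ G}(\ZZ, M) = H^1(G, M)$; all differentials into or out of the $(1,0)$-spot vanish for degree reasons, so $E_\infty^{1,0} = E_2^{1,0}$ and the edge map yields $H^1(B, \locsys) \cong H^1(G, M)$.

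The only real obstacle is bookkeeping around the hyperext spectral sequence: its existence for a chain complex of projectives, the precise form of its $E_2$-page, and the convergence statement all require care to set up, but each piece is standard. The identifications at the two ends of the argument (equivariant cochains on $\tilde B$ on one side, $\Ext$ over $\ZZ G$ on the other) are then essentially formal.
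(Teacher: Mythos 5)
Your proposal is correct and follows essentially the same route as the paper's own (sketched) proof: identify $H^*(B,\locsys)$ with the cohomology of $\Hom_{\ZZ G}\bigl(C_\bullet(\tilde B,\ZZ),M\bigr)$ and run the two spectral sequences of the double complex $\Hom_{\ZZ G}\bigl(C_\bullet(\tilde B,\ZZ),I^\bullet\bigr)$ for an injective resolution $I^\bullet$ of $M$, then read off the low-degree terms using $H_0(\tilde B,\ZZ)=\ZZ$ and $H_1(\tilde B,\ZZ)=0$. Your write-up simply supplies more of the bookkeeping (freeness of the chain complex over $\ZZ G$, collapse of the row filtration, the edge argument) than the paper records.
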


\begin{proof}
We sketch the proof. Let $S_{\bullet}(\tilde{B}, \ZZ)$ be the
singular chain complex of $\tilde{B}$;
it is a complex of $G$-modules, because $G$ acts on $\tilde{B}$ by deck
transformations.
According to one definition, the cohomology of the local system $\locsys$ is the
cohomology of the complex $\Hom_{\ZZ G} \bigl( S_{\bullet}(\tilde{B}, \ZZ), M
\bigr)$. The spectral sequence in question comes from the double complex
$\Hom_{\ZZ G} \bigl( S_{\bullet}(\tilde{B}, \ZZ), I^{\bullet} \bigr)$,
where $I^{\bullet}$ is any injective resolution of $M$ in the category of $\ZZ
G$-modules.
The second assertion follows immediately from the spectral sequence, because
$H_0(\tilde{B}, \ZZ) \simeq \ZZ$, while $H_1(\tilde{B}, \ZZ) = 0$.
\end{proof}

In our case, the local system $(R^{d-1} \pisml \QQ) \van$ corresponds to the $G$-module
$V_{\QQ} = H^{d-1}(S_0, \QQ) \van$. Thus Lemma~\ref{lem:coh-locsys} gives us an isomorphism
\begin{equation} \label{eq:step2}
	H^1 \bigl( \Psm, (R^{d-1} \pisml \QQ) \van \bigr) \simeq 
		H^1 \bigl( G, V_{\QQ} \bigr)
\end{equation}
with the first group cohomology of $V_{\QQ}$.

\subsection*{The third map}

The third step is of a purely algebraic nature. Namely, for any $G$-module $M$, 
we have a restriction map
\[
	H^1(G, M) \to \prod_{g \in G} H^1 \bigl( \gZZ, M \bigr),
\]
where $\gZZ$ is the cyclic subgroup generated by $g$. From the explicit
description, it is easy to see that $H^1 \bigl( \gZZ, M \bigr) \simeq M / (g -
\id) M$. The resulting map
\begin{equation} \label{eq:restr-M}
	H^1(G, M) \to \prod_{g \in G} M / (g - \id) M
\end{equation}
takes the class of a $1$-cocycle $\phi$ to the element of the product with components $\phi(g) +
(g - \id) M$.

In the case at hand, where $G$ is the fundamental group of $\Psm$, the $G$-module is
$V_{\QQ}$, and the restriction map becomes
\begin{equation} \label{eq:step3}
	H^1 \bigl( G, V_{\QQ} \bigr) \to 
		\prod_{g \in G} V_{\QQ} / (g - \id) V_{\QQ}.
\end{equation}
Unfortunately, \eqref{eq:restr-M} fails to be injective for general $M$ (an example
is given on p.~\pageref{ex:non-injective}); nevertheless, we shall prove its
injectivity for certain $G$-modules, in particular for the vanishing cohomology $V_{\QQ}$.

It should be noted that, as a representation of $G$, the nature of $V_{\QQ}$ is very
different for even and odd values of $d$. This is because the intersection pairing is
symmetric when $d$ is odd, but alternating when $d$ is even. Consequently, the proof
that \eqref{eq:step3} is injective has to be different in the two cases. When $d$ is
odd, it is a straightforward calculation, given in Section~\ref{sec:group-odd}. When
$d$ is even, we show that $H^{d-1}(S_0, \ZZ) \van$, modulo torsion, is a vanishing
lattice \cite{Janssen}. We can then use results by W.~Janssen about the structure of
vanishing lattices to prove the injectivity. Details can be found in
Section~\ref{sec:group-even}.

\subsection*{Conclusion of the proof}

Composing the three maps in \eqref{eq:step1}, \eqref{eq:step2}, and \eqref{eq:step3},
we finally obtain an injective map
\begin{equation} \label{eq:map-main}
	H^d(X, \QQ) \prim \to 
		\prod_{g \in G} V_{\QQ} / (g - \id) V_{\QQ}.
\end{equation}
We shall complete the proof of Theorem~\ref{thm:tube-main} by showing that this map 
agrees with the one in \eqref{eq:tube-dual}. As pointed out above, the injectivity of
\eqref{eq:tube-dual} is equivalent to the surjectivity of the tube mapping, and so we
get our result. \qed

\section{Topology of the universal hypersurface}
\label{sec:Lefschetz}

The main purpose of this section is to show that the map in \eqref{eq:step1} is
injective, as long as $H^{d-1}(S_0, \QQ) \van \neq 0$. As we have seen, this is
the same as showing the injectivity of the map
\[
	H^d(X, \QQ) \prim \to H^1 \bigl( \Psm, R^{d-1} \pisml \QQ \bigr),
\]
derived from the Leray spectral sequence.
Along the way, we shall review several results about the vanishing cohomology of
$S_0$ that are obtained by studying Lefschetz pencils on $X$. 
Throughout, we shall assume that $V_{\QQ} = H^{d-1}(S_0, \QQ) \van \neq 0$. 

\subsection*{Lefschetz pencils}

Recall that $P$ is the space of all hyperplanes (in the ambient projective
space), and $\Psm$ the subset of those $H$ for which $X \cap H$ is smooth. The
\emph{dual variety} $\dualX = P \setminus \Psm$ is the set of hyperplanes such that
$X \cap H$ is singular.  It is an irreducible subvariety of $P$; since we are
assuming that the vanishing cohomology is nontrivial, it is actually a hypersurface,
whose smooth points correspond to hyperplane sections of $X$ with a single ordinary
double point.
\begin{lemma} \label{lem:dualX}
If $V_{\QQ} \neq 0$, then $\dualX$ is a hypersurface in $P$.
\end{lemma}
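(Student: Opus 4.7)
The plan is to prove the contrapositive: assuming $\dualX$ has codimension at least two in $P$, I will derive $V_{\QQ} = 0$. The key classical input (due to Lefschetz; see for instance \cite{Voisin}*{Section~13}) is that the kernel of the Gysin map $\il \colon H_{d-1}(S_0, \QQ) \to H_{d-1}(X, \QQ)$, which is Poincar\'e-dual to $V_{\QQ}$, is spanned by the vanishing cycles coming from the critical fibers of any Lefschetz pencil on $X$.

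Next I would use the codimension hypothesis to choose a line $\ell \subset P$ through $H_0$ that misses $\dualX$ entirely. Such an $\ell$ exists because the lines in $P$ through $H_0$ form a projective space of dimension $\dim P - 1$, while the condition of meeting a fixed subvariety of codimension at least two is a proper closed condition. Restricting the universal family $\pi \colon \famX \to P$ to this $\ell \cong \PPn{1}$ produces a pencil on $X$ whose members are all smooth hyperplane sections. Such a pencil is a Lefschetz pencil in the trivial sense---the transversality condition at $\ell \cap \dualX$ is vacuous---and it has no critical fibers, so the associated collection of vanishing cycles is empty. Applying the generation theorem, $\ker \il = 0$, and dually $V_{\QQ} = 0$, contradicting the hypothesis.

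The main obstacle will be invoking the Lefschetz generation theorem in the degenerate case of a pencil with no critical fibers, since the classical statement is sometimes phrased only for pencils with a nonempty discriminant. To sidestep any concern, I would argue directly via the Leray spectral sequence for the blow-up $\tilde{X} \to \PPn{1}$ along the base locus of the chosen pencil: this is a smooth projective family with simply connected base, so $R^{d-1} f_{\ast} \QQ$ is a constant local system, and combined with the standard blow-up formula relating $H^{\ast}(\tilde{X})$ to $H^{\ast}(X)$ and the cohomology of the base locus, this should directly yield the injectivity of $\il$. Either route exploits the same topological fact: vanishing cycles can only be produced at singular fibers, and a generic pencil has none when $\dualX$ has codimension at least two.
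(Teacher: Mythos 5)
Your fallback argument is correct and is in substance the paper's own proof: both argue the contrapositive, choose a line disjoint from $\dualX$ (possible because the codimension is at least two), and exploit that the restricted family $f \colon \tilde{X} \to \PPn{1}$ is smooth and projective over a simply connected base, so that the local systems $R^q \fl \QQ$ are constant and the Leray spectral sequence degenerates by Deligne's theorem. The only real difference is the last step: the paper identifies $H^{d-1} \bigl( \tilde{X}, \QQ \bigr)$ with $H^{d-1} \bigl( \PPn{1} \times X, \QQ \bigr)$ via the Lefschetz Hyperplane Theorem, using that $\tilde{X}$ is a smooth very ample hypersurface in $\PPn{1} \times X$, and then compares the two Leray sequences; you instead use that $\tilde{X}$ is the blow-up of $X$ along the base locus $Z = X \cap H_0 \cap H_\infty$ and invoke the blow-up formula. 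Your route works, but you should record the two geometric facts it needs, both of which follow from the line missing $\dualX$: every member of the pencil is smooth, hence $\tilde{X}$ and $Z$ are smooth (a point where the differentials of the two defining linear forms become dependent on $T_xX$ would be a singular point of some member), so $\tilde{X}$ really is a blow-up along a smooth codimension-two center. Then $H^{d-1}(\tilde{X}, \QQ) \simeq H^{d-1}(X, \QQ) \oplus H^{d-3}(Z, \QQ)$, while Leray gives $H^{d-1}(\tilde{X}, \QQ) \simeq H^{d-1}(S_0, \QQ) \oplus H^{d-3}(S_0, \QQ)$; since $H^{d-3}(Z, \QQ) \simeq H^{d-3}(S_0, \QQ)$ by Lefschetz for the hyperplane section $Z \subseteq S_0$, a dimension count together with the injectivity of $\iu$ and the decomposition \eqref{eq:decompS} yields $V_{\QQ} = 0$. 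By contrast, your primary route---quoting the generation theorem for a pencil with empty discriminant---is the step I would not lean on: the degenerate case of that theorem is essentially the statement of the present lemma, so invoking it is circular unless you verify that the classical proof (which is precisely the Leray comparison above) covers an empty critical locus; you flagged this correctly, and your fallback is the right fix.
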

\begin{proof}
We will prove the converse: if $\dualX$ is not a hypersurface, then necessarily
$V_{\QQ} = 0$. So let us suppose that the codimension of $\dualX$ is at least two.
Choose a line $\PPn{1} \subseteq P$ that does not meet $\dualX$, and let $f \colon
\tilde{X} \to \PPn{1}$ be the restriction of the family of hyperplane sections to
$\PPn{1}$. Then $f$ is smooth and projective, and since $\PPn{1}$
is simply connected, all the local systems $R^q \fl \QQ$ are constant, with fiber 
$H^q(S_0, \QQ)$. Now consider the Leray spectral sequence for the map $f \colon \tilde{X} \to
\PPn{1}$. By Deligne's theorem, it degenerates at $E_2$, and thus gives a short exact
sequence
\begin{diagram}
0 &\rTo& H^2 \bigl( \PPn{1}, R^{d-3} \fl \QQ \bigr)  &\rTo& 
		H^{d-1} \bigl( \tilde{X}, \QQ \bigr) &\rTo& 
		H^0 \bigl( \PPn{1}, R^{d-1} \fl \QQ \bigr) &\rTo& 0;
\end{diagram}
using that $R^q \fl \QQ$ is constant, this amounts to the exactness of the first row
in the following diagram. (All cohomology groups are with coefficients in $\QQ$.)
\begin{diagram}
0 &\rTo& H^2 \bigl( \PPn{1} \bigr) \tensor H^{d-3}(S_0)  &\rTo& 
		H^{d-1} \bigl( \tilde{X} \bigr) &\rTo& 
		H^0 \bigl( \PPn{1} \bigr) \tensor H^{d-1}(S_0) &\rTo& 0 \\
&& \uTo_{\simeq} && \uTo_{\simeq} && \uTo \\
0 &\rTo& H^2 \bigl( \PPn{1} \bigr) \tensor H^{d-3}(X)  &\rTo& 
		H^{d-1} \bigl( \PPn{1} \times X \bigr) &\rTo& 
		H^0 \bigl( \PPn{1} \bigr) \tensor H^{d-1}(X) &\rTo& 0
\end{diagram}

The two vertical maps are isomorphisms because of the Hyperplane Theorem. Indeed, we have
already seen that $H^{d-3}(X, \QQ) \simeq H^{d-3}(S_0, \QQ)$.  On the other hand,
$\tilde{X} \subseteq \PPn{1} \times X$ is itself a smooth very ample hypersurface of
dimension $d$, and so we also have $H^{d-1} \bigl( \PPn{1} \times X, \QQ \bigr)
\simeq H^{d-1} \bigl( \tilde{X}, \QQ \bigr)$. It follows that $H^{d-1}(X, \QQ) \simeq
H^{d-1}(S_0, \QQ)$, which means that $V_{\QQ} = H^{d-1}(S_0, \QQ) \van$ is reduced to zero.
\end{proof}

Now take any Lefschetz pencil of hyperplane sections of $X$ containing
$S_0$; in other words, a line $\PPn{1} \subseteq P$ through the base point $H_0 \in P$
that meets  $\dualX$ transversely in finitely many points.  Also let $B = \PPn{1}
\cap \Psm$ be the smooth locus of the pencil, and let $0 \in B$ be the point
whose image is $H_0$.
We write $\tilde{X} \to \PPn{1}$ for the restriction of the family $\famX \to P$ to the line,
and $U \subseteq \tilde{X}$ for the part that lies over $B$. The following diagram shows
the relevant maps; all diagonal arrows are inclusions of open subsets.
\begin{diagram}[size=1.4em,height=1.3em,tight]
& & \tilde{X} & & \rTo & & \famX & & \rTo^q& & X \\
& \ruTo & \vLine & & & \ruTo & \\
U & \rTo & \HonV & & \famXsm & & \dTo_{\pi} \\
& & \dTo & & \dTo & & \\
\dTo^f & & \PPn{1} & \hLine & \VonH & \rTo & P \\
& \ruTo & & & & \ruTo & \\
B & & \rTo &  & \Psm & &  \\
\end{diagram}

We know from Lemma~\ref{lem:dualX} that $D = \PPn{1} \cap \dualX$ is nonempty; say $D
= \{ t_1, \dotsc, t_n \}$, with all $t_i$ distinct and different from the base point
$0$. Let $S_i$ be the hyperplane section of $X$ corresponding to the point $t_i$;
each $S_i$ has a single ordinary double point (ODP). From a local analysis around an ODP
singularity, it is known that $S_0$ contains an embedded $(d-1)$-sphere for each $i$,
the so-called \define{vanishing cycle} for the singularity on $S_i$. Moreover, the
homotopy type of $S_i$ is that of $S_0$ with a $d$-cell attached along the vanishing
cycle \cite{Voisin}*{p.~322}.

The vanishing homology $\ker \bigl( \il \colon H_{d-1}(S_0, \ZZ) \to H_{d-1}(X, \ZZ) \bigr)$
is generated over $\ZZ$ by the classes of these spheres \cite{Voisin}*{Lemme~14.26 on
p.~327}. Writing $e_i$ for the cohomology class Poincar\'e dual to the $i$-th
vanishing cycle, the $e_i$ thus generate the vanishing cohomology with integer coefficients
\[
	H^{d-1}(S_0, \ZZ) \van = 
		\ker \bigl( \il \colon H^{d-1}(S_0, \ZZ) \to H^{d+1}(X, \ZZ) \bigr).
\]
Since $\dualX$ is irreducible, it is further known \cite{Voisin}*{Corollaire~15.24 on
p.~353} that all the $e_i$ lie in one
orbit of the monodromy action of $\pi_1(B, 0)$ on $H^{d-1}(S_0, \ZZ)$. In particular,
we have $e_i \neq 0$ in $H^{d-1}(S_0, \QQ) \van$, because we are assuming that
the latter is nontrivial.

\begin{lemma} \label{lem:Si-trivial}
Classes in $H^d(X, \QQ) \prim$ have trivial restriction to each of the singular
hyperplane sections $S_i$.
\end{lemma}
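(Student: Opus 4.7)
The plan is to factor the inclusion $\iota_i \colon S_i \hookrightarrow X$ through a nearby smooth hyperplane section, then use primitivity plus a cofiber-sequence argument to transfer the resulting vanishing back to $S_i$.

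First, pick a small open disk $\Delta \subset \PPn{1}$ about $t_i$ that contains neither $0$ nor any other $t_j$, and set $V = f^{-1}(\Delta) \subset \tilde{X}$. The standard Morse-theoretic picture of a nodal degeneration (local normal form $\sum z_j^2 = s$ near the ODP, with $f$ a submersion elsewhere) provides a deformation retraction $r \colon V \to S_i$ whose restriction to any smooth fiber $S_t$, $t \in \Delta \setminus \{t_i\}$, is homotopic to the quotient map $\rho_t \colon S_t \to S_t/\delta \simeq S_i$ that collapses the vanishing sphere $\delta \simeq \SSn{d-1}$ to the singular point. This is essentially the statement, cited earlier in the excerpt, that $S_i$ has the homotopy type of a smooth fiber with a $d$-cell attached along $\delta$. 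Since the composition $V \hookrightarrow \tilde{X} \to X$ restricts to the hyperplane-section inclusions $\iota_t$ and $\iota_i$ on $S_t$ and $S_i$, we obtain the homotopy $\iota_t \simeq \iota_i \circ \rho_t$.

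Now let $\omega \in H^d(X, \QQ) \prim$. Primitivity gives $\iota_t^* \omega = 0$, so $\rho_t^*(\iota_i^* \omega) = 0$ in $H^d(S_t, \QQ)$, and it suffices to show that $\rho_t^*$ is injective on $H^d$. The Puppe cofiber sequence for $\delta \hookrightarrow S_t \xrightarrow{\rho_t} S_i$ reads
\[
    H^{d-1}(S_t, \QQ) \xrightarrow{j^*} H^{d-1}(\delta, \QQ) \to H^d(S_i, \QQ) \xrightarrow{\rho_t^*} H^d(S_t, \QQ),
\]
so injectivity of $\rho_t^*$ reduces to surjectivity of $j^* \colon H^{d-1}(S_t, \QQ) \to H^{d-1}(\delta, \QQ) = \QQ$. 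Under the Kronecker pairing on the smooth manifold $S_t$, $j^*$ is dual to the map $\QQ \to H_{d-1}(S_t, \QQ)$ sending $1$ to $[\delta]$; and $[\delta] \neq 0$ because its Poincar\'e dual $e_i$ is nonzero in $H^{d-1}(S_0, \QQ) \van$, as noted in the paragraph preceding the lemma, and the vanishing cohomology is a direct summand of $H^{d-1}(S_t, \QQ)$. Hence $j^*$ is surjective, $\rho_t^*$ is injective on $H^d$, and $\iota_i^* \omega = 0$.

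The main obstacle is the Morse-theoretic construction of the retraction $r$ together with the identification of $r|_{S_t}$ with the collapse map $\rho_t$; once these are in hand the cohomological argument is purely formal. Both are direct consequences of the local normal form at the ODP combined with the submersion property of $f$ elsewhere, and are already implicit in the statements about Lefschetz pencils recalled earlier in the excerpt, so they may be invoked with only a brief justification.
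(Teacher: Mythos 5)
Your argument is correct and is essentially the paper's proof: both rest on the fact that $S_i$ has the homotopy type of a nearby smooth fiber with a $d$-cell attached along the vanishing sphere, use the resulting exact sequence (your Puppe sequence is the same sequence the paper extracts via Mayer--Vietoris) together with $e_i \neq 0$ in $H^{d-1}(S_0,\QQ)\van$ to see that restriction from $H^d(S_i,\QQ)$ to the smooth fiber is injective, and then invoke primitivity. The only difference is that you spell out the retraction of the disk preimage onto $S_i$ and the identification of its restriction with the collapse map, which the paper leaves implicit in its citation of the local ODP picture.
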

\begin{proof}
The singular hyperplane section $S_i$ is homotopy-equivalent to $S_0$ with a $d$-cell attached along the
$i$-th vanishing cycle. From the Mayer-Vietoris sequence in cohomology, we thus get
an exact sequence isomorphic to
\begin{diagram}
H^{d-1}(S_0, \QQ) &\rTo& H^{d-1} \bigl( \SSn{d-1}, \QQ \bigr) &\rTo& 
		H^d(S_i, \QQ) &\rTo& H^d(S_0, \QQ) &\rTo& 0.
\end{diagram}
Since $e_i \neq 0$, the first map in the sequence is nontrivial, and so
$H^{d-1}(S_i, \QQ) \simeq H^{d-1}(S_0, \QQ)$. In particular, every primitive
cohomology class on $X$ has trivial restriction to $S_i$.
\end{proof}

\begin{lemma} \label{lem:XU}
The pullback map $H^d(X, \QQ) \prim \to H^d(U, \QQ)$ is injective.
\end{lemma}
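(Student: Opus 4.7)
The map $q \colon U \to X$ factors as the open inclusion $U \hookrightarrow \tilde X$ followed by the blow-up $p \colon \tilde X \to X$ of $X$ along the base locus of the pencil. Since $p$ is a proper birational morphism between smooth projective varieties of the same dimension, the Gysin map $p_!$ satisfies $p_! p^* = \id$; in particular $p^*$ is injective on rational cohomology. I plan to reduce the lemma to showing that, given $\omega \in H^d(X, \QQ) \prim$ with $q^*\omega = 0$, the class $\omega = p_!(p^*\omega)$ must lie in $L H^{d-2}(X, \QQ)$, the Lefschetz-orthogonal complement of primitive cohomology in \eqref{eq:decompX}; combined with $\omega \in H^d(X, \QQ) \prim$ this forces $\omega = 0$.

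From $q^*\omega = 0$, the long exact sequence of the pair $(\tilde X, U)$ produces a class $\xi \in H^d(\tilde X, U, \QQ)$ with $p^*\omega = j_* \xi$. Write $S = S_1 \sqcup \dotsb \sqcup S_n$ for the disjoint union of singular fibers inside $\tilde X$; excision identifies $H^d(\tilde X, U) \cong H^d(\tilde X, \tilde X \setminus S)$, and Poincar\'e-Lefschetz duality for the compact oriented $2d$-manifold $\tilde X$ gives a canonical isomorphism with $H_d(S, \QQ) = \bigoplus_i H_d(S_i, \QQ)$. Under these identifications $j_*$ corresponds to pushforward along $S \hookrightarrow \tilde X$ and $p_!$ to $p_*$. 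Because the nodes of the $S_i$ avoid the blow-up center, the restriction $p\vert_{S_i}$ is an isomorphism, so $\omega = p_! j_* \xi$ is the image in $H_d(X, \QQ)$ of $\bigoplus_i \xi_i$ pushed forward along the inclusions $S_i \hookrightarrow X$.

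The Mayer-Vietoris computation already used in Lemma~\ref{lem:Si-trivial} gives an isomorphism $H_d(S_0, \QQ) \cong H_d(S_i, \QQ)$, because the vanishing cycle $\delta_i \in H_{d-1}(S_0, \QQ)$ is nonzero (all the $\delta_i$ lie in a single monodromy orbit, and $V_{\QQ} \neq 0$ by hypothesis). A one-parameter specialization in the family $\tilde X \to \PPn{1}$ along a path from $t_0$ to $t_i$ then produces a $(d+1)$-chain in $\tilde X$ whose boundary realizes this isomorphism, showing that the composite $H_d(S_i, \QQ) \to H_d(\tilde X, \QQ) \to H_d(X, \QQ)$ factors through $(i_0)_* \colon H_d(S_0, \QQ) \to H_d(X, \QQ)$. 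The image therefore lies in $(i_0)_* H_d(S_0, \QQ)$, which is Poincar\'e dual in $H^d(X, \QQ)$ to $L H^{d-2}(X, \QQ)$ by the projection formula together with the surjectivity of $i_0^* \colon H^{d-2}(X, \QQ) \to H^{d-2}(S_0, \QQ)$ provided by the Lefschetz Hyperplane Theorem.

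The main obstacle will be verifying the compatibility of the Mayer-Vietoris isomorphism $H_d(S_0, \QQ) \cong H_d(S_i, \QQ)$ with the geometric degeneration inside $\tilde X$: I will use the local Milnor-fiber model of an ordinary double point to exhibit an explicit $(d+1)$-chain connecting a smooth $d$-cycle on $S_0$ with its specialization on $S_i$. Every other ingredient is standard (Poincar\'e-Lefschetz duality, the Lefschetz Hyperplane Theorem, and the orthogonal Lefschetz decomposition in cohomology).
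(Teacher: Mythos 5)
Your argument is essentially correct, but it takes a genuinely different route from the paper's. The paper argues by duality on $U$: from the compact-support sequence $H^d_c(U,\QQ) \to H^d\bigl(\tilde{X},\QQ\bigr) \to \bigoplus_i H^d(S_i,\QQ)$ and the isomorphism $H^d_c(U,\QQ) \simeq \Hom\bigl(H^d(U,\QQ),\QQ\bigr)$, a primitive class $\omega$ restricting to zero on $U$ gives a functional ``cup with $\qu\omega$'' on $H^d\bigl(\tilde{X},\QQ\bigr)$ that factors through $\bigoplus_i H^d(S_i,\QQ)$; pairing with any other primitive class $\alpha$ and invoking Lemma~\ref{lem:Si-trivial} yields $\int_X \omega\cup\alpha = 0$, and nondegeneracy of the intersection form on $H^d(X,\QQ)\prim$ finishes. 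You instead dualize on the singular fibers: Lefschetz duality identifies the relative class with an element of $\bigoplus_i H_d(S_i,\QQ)$, the specialization maps $H_d(S_0,\QQ)\to H_d(S_i,\QQ)$ are surjective because $\delta_i \neq 0$ rationally (the homological mirror of the computation in Lemma~\ref{lem:Si-trivial}, resting on the same inputs, irreducibility of $\dualX$ and $V_{\QQ}\neq 0$), so $\PD(\omega)$ lies in $i_{0\ast}H_d(S_0,\QQ)$, whose Poincar\'e dual is $\il H^{d-2}(S_0,\QQ) = L H^{d-2}(X,\QQ)$ by the projection formula and the Hyperplane Theorem in degree $d-2$; the direct sum \eqref{eq:decompX} then forces $\omega = 0$. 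The step you flag as the main obstacle is lighter than you suggest: no explicit Milnor-fiber chain is needed, only that $f^{-1}(\Delta_i)$ for a small disk around $t_i$ deformation retracts onto $S_i$ and that the inclusion of a nearby smooth fiber composed with this retraction realizes ``$S_0$ with a $d$-cell attached along the vanishing cycle,'' the very fact from Voisin already quoted in Section~\ref{sec:Lefschetz}; surjectivity of $H_d(S_0,\QQ)\to H_d(S_i,\QQ)$ is then immediate from the long exact sequence of the pair, and transporting the nearby fiber back to $S_0$ inside $\tilde{X}$ gives the containment in $i_{0\ast}H_d(S_0,\QQ)$. Comparing the two: the paper's pairing argument avoids the blow-up structure, the duality identification for the pair $(\tilde{X},U)$, and any discussion of specialization, so it is shorter; yours is more geometric, exhibiting the Poincar\'e dual of $\omega$ explicitly as a sum of $d$-cycles supported on the singular fibers and absorbed into $i_{0\ast}H_d(S_0,\QQ)$, and it localizes the Hard Lefschetz input in the decomposition \eqref{eq:decompX} rather than in the nondegeneracy of the primitive intersection form (equivalent inputs, differently packaged).
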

\begin{proof}
The complement of $U$ in $\tilde{X}$ is the disjoint union of the singular
fibers $S_i$; thus we have an exact sequence
\begin{diagram}
\dotsb &\rTo& H_c^d(U, \QQ) &\rTo& H^d \bigl( \tilde{X}, \QQ \bigr) &\rTo&
	\bigoplus_{i=1}^n H^d(S_i, \QQ) &\rTo& H_c^{d+1}(U, \QQ) &\rTo& \dotsb
\end{diagram}
for cohomology with compact support. As $U$ is a manifold, 
\[
	H_c^d(U, \QQ) \simeq \Hom \bigl( H^d(U, \QQ), \QQ \bigr),
\]
with the isomorphism given by integration over $U$.

Now let $\omega \in H^d(X, \QQ) \prim$ be any class whose pullback $\qu \omega$ has 
trivial restriction to $U$. The functional $H^d \bigl( \tilde{X}, \QQ \bigr) \to \QQ$, given by
integrating against $\qu \omega$, is then zero on $H_c^d(U, \QQ)$, and thus factors
through the image of $H^d \bigl( \tilde{X}, \QQ \bigr) \to \bigoplus_i H^d(S_i,
\QQ)$. Let $\lambda \colon \bigoplus_i H^d(S_i, \QQ) \to \QQ $ be any
extension to the entire direct sum. For each $\alpha \in H^d(X, \QQ) \prim$
we then have
\[
	\int_X \omega \cup \alpha = 
	\int_{\tilde{X}} \qu \omega \cup \qu \alpha = 
		\lambda \Bigl( \restr{\alpha}{S_1}, \dotsc, \restr{\alpha}{S_n} \Bigr) = 0
\]
by Lemma~\ref{lem:Si-trivial}. But the intersection pairing on $H^d(X, \QQ) \prim$ is
nondegenerate, and so $\omega = 0$.
\end{proof}

\subsection*{Injectivity of the map}

Now consider the Leray spectral sequence for the map $f \colon U \to B$. Since $B$
has the homotopy-type of a bouquet of circles, the spectral sequence degenerates, and
we get a short exact sequence
\begin{diagram}
0 &\rTo& H^1 \bigl( B, R^{d-1} \fl \QQ \bigr) &\rTo& 
		H^d(U, \QQ) &\rTo& H^0 \bigl( B, R^d \fl \QQ \bigr) &\rTo& 0.
\end{diagram}
By definition, classes in $H^d(X, \QQ) \prim$ go to zero in the group on the right;
Lemma~\ref{lem:XU} then lets us conclude that the induced map
\[
	H^d(X, \QQ) \prim \to H^1 \bigl( B, R^{d-1} \fl \QQ \bigr)
\]
has to be injective. This immediately implies the injectivity of \eqref{eq:step1}.
To see this, note that functoriality of the Leray spectral sequence gives a
factorization
\[
	H^d(X, \QQ) \prim \to H^1 \bigl( \Psm, R^{d-1} \pisml \QQ \bigr) \to
		H^1 \bigl( B, R^{d-1} \fl \QQ \bigr).
\]
The right-hand map is injective, because the map of fundamental groups
\[
	\pi_1(B, 0) \to \pi_1 \bigl( \Psm, H_0 \bigr) = G
\]
is surjective by Zariski's theorem \cite{Voisin}*{Th\'eor\`eme~15.22 on p.~351}. Thus
the left-hand map also injective, proving our claim.

\subsection*{Vanishing cycles with intersection number one}

We have seen that the vanishing cohomology $H^{d-1}(S_0, \ZZ) \van$ is generated by
the Poincar\'e duals $e_i$ of the vanishing cycles for any Lefschetz pencil.
More generally, we shall refer to any element in the orbit $\Delta = G \cdot \{ e_1,
\dotsc, e_n \}$ as a vanishing cycle. As shown above, all $\delta \in \Delta$ are
nontrivial even as elements of $V_{\QQ} = H^{d-1}(S_0, \QQ) \van$.

The fundamental group $\pi_1(B, 0)$ is isomorphic to a free group on $(n-1)$
letters; in fact, a set of generators is given by taking, for each $i = 1, \dotsc,
n$, a loop $g_i$ based at $0$ that goes exactly once around the point $t_i$ with
positive orientation, but not around any of the other $t_j$. The only relation is the
obvious one, namely that $g_1 \dotsc g_n = 0$. By Zariski's theorem, $G$ itself is
also generated by the $g_i$.

The monodromy action of each $g_i$ on $H^{d-1}(S_0, \ZZ)$ is described explicitly by
the Picard-Lefschetz formula \cite{Voisin}*{Th\'eor\`eme~15.16 on p.~345}
\begin{equation} \label{eq:Picard-Lefschetz}
	g_i \cdot \alpha = \alpha - \eps_d (\alpha, e_i) e_i,
\end{equation}
where $(\argbl, \argbl)$ is the intersection pairing on $S_0$, and $\eps_d =
(-1)^{d(d-1)/2}$. This has different consequences for odd and even values of $d$:
\begin{enumerate}[label=(\roman{*}), ref=(\roman{*})]
\item When $d$ is odd, $S_0$ has even dimension, and the intersection pairing is
symmetric. Moreover, each vanishing cycle has self-intersection number $2 \eps_d$,
and $g_i^2$ acts trivially on $H^{d-1}(S_0, \ZZ)$.
\item When $d$ is even, $S_0$ has odd dimension, and the intersection pairing is
skew-symmetric. Consequently, the self-intersection of $e_i$ is zero, and the element
$g_i$ is of infinite order.
\end{enumerate}
The same formulas are of course true for every vanishing cycle $\delta \in \Delta$.

To analyze the structure of $V_{\QQ}$ for even values of $d$, we will need the
following lemma about the set $\Delta$. It is the main step in showing that
$H^{d-1}(S_0, \ZZ) \van$ is a skew-symmetric vanishing lattice \cite{Janssen}.
\begin{lemma} \label{lem:number-one}
Assume that $d = \dim X$ is even. Then there are two vanishing cycles
$\delta_1, \delta_2 \in \Delta$ with $(\delta_1, \delta_2) = 1$.
\end{lemma}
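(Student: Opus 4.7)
The plan is a two-step argument: first, produce some pair in $\Delta$ with nonzero intersection; then, promote this to a pair with intersection $\pm 1$, from which we can arrange $+1$ by swapping the two arguments if necessary (using skew-symmetry).

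For the first step, I would argue by contradiction. Suppose $(\delta, \delta') = 0$ for every $\delta, \delta' \in \Delta$. The Picard-Lefschetz formula \eqref{eq:Picard-Lefschetz} with $\alpha = e_j$ then yields $g_i \cdot e_j = e_j - \eps_d (e_j, e_i) e_i = e_j$ for every $i, j$. Since $\pi_1(B, 0)$ is generated by the loops $g_i$ and surjects onto $G$ by Zariski's theorem, $G$ fixes each $e_j$. But $\Delta$ is a single $G$-orbit by Voisin's Corollaire~15.24, so all the $e_i$ must coincide and $V_{\QQ}$ is at most one-dimensional. A skew-symmetric form (recall $d$ is even) on a space of dimension at most one is identically zero, contradicting the non-degeneracy of the intersection pairing on $V_{\QQ}$ (from the Hard Lefschetz decomposition \eqref{eq:decompS}) combined with the standing hypothesis $V_{\QQ} \neq 0$.

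For the second step, let $m$ be the positive generator of the subgroup of $\ZZ$ generated by all pairings $(\delta_1, \delta_2)$, $\delta_1, \delta_2 \in \Delta$; the goal is $m = 1$. Writing $T_\delta(\alpha) = \alpha - \eps_d(\alpha, \delta)\delta$ for the Picard-Lefschetz transformation along a general vanishing cycle $\delta \in \Delta$, we have $T_\delta(\Delta) \subseteq \Delta$, and for any third vanishing cycle $\delta''$ the identity $(T_\delta(\delta'), \delta'') = (\delta', \delta'') + \eps_d (\delta, \delta')(\delta, \delta'')$ shows that new pairings arise as integer combinations of existing ones. Iterating and using transitivity of $G$ on $\Delta$ to chain vanishing cycles through pairs with nonzero pairings (guaranteed by Step 1), a Bezout-style reduction should exhibit $m$ as an actual pairing between two elements of $\Delta$, and force $m = 1$.

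I expect the principal difficulty to lie in rigorizing this last step: a single Picard-Lefschetz move on one fixed pair cannot lower $|(\delta, \delta')|$, so the argument must coordinate three or more vanishing cycles, requiring more than just the existence guaranteed by Step 1. A cleaner geometric alternative is to analyze the singular locus of the discriminant $\dualX$: it contains a stratum parametrizing hyperplane sections acquiring an $A_2$ (cuspidal) singularity---namely a coalescence of two ordinary nodes---and the local Milnor fibration at such a point produces two vanishing cycles meeting transversely in a single point, giving $(\delta_1, \delta_2) = \pm 1$ directly. Establishing such a cuspidal degeneration for a sufficiently positive embedding (permitted in Theorem~\ref{thm:tube-main} after a Veronese composition) is the key technical ingredient I would expect to require the most care.
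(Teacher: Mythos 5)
Your Step 1 is correct (and can even be shortened: $\Delta$ spans $V_{\QQ}$ and the intersection pairing is nondegenerate on the vanishing cohomology, so not all pairings can vanish), but the real content of the lemma lies in Step 2, and the ``Bezout-style reduction'' there cannot work. Your own identity $(T_\delta(\delta'),\delta'') = (\delta',\delta'') - \eps_d(\delta',\delta)\,(\delta,\delta'')$ shows that every pairing obtainable by applying Picard--Lefschetz transvections lies in the subgroup $m\ZZ\subseteq\ZZ$ generated by the pairings you already have; monodromy manipulations therefore never shrink $m$, and at best realize $\pm m$ as an actual pairing. That $m=1$ is exactly what the lemma asserts, and it is a genuinely independent condition: in Janssen's axioms for a skew-symmetric vanishing lattice, the existence of a pair with $\pair{\delta_1}{\delta_2}=1$ is listed separately from ``$\Delta$ generates'' and ``$\Delta$ is a single orbit'' precisely because it does not follow from them. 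So the purely algebraic route is circular.

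Your ``cleaner geometric alternative'' is in fact the paper's route, but the part you defer --- producing a hyperplane section with an $A_2$ (or any non-ODP isolated) singularity --- is the whole point, and the crutch you propose (assume the embedding is sufficiently positive after a Veronese) is not available: the lemma must be proved for the \emph{given} embedding under the sole hypothesis $H^{d-1}(S_0,\QQ)\van\neq 0$, and composing with a Veronese changes $P$, $G$ and the statement itself. The paper produces the cusp without any positivity assumption: a general plane $\PPn{2}\subseteq P$ through the base point meets $\dualX$ in an irreducible curve $C$ whose only singularities are nodes and cusps, and $\pi_1(\PPn{2}\setminus C)\simeq G$ by the strong form of Zariski's theorem; if $C$ had only nodes, Deligne's theorem would force this group to be abelian, hence finite cyclic since $C$ is irreducible, so every Picard--Lefschetz transvection $g_i$ would have finite order --- impossible for even $d$, because then $g_i^k\alpha=\alpha-k\,\eps_d(\alpha,e_i)\,e_i$ with $e_i\neq 0$ in $V_{\QQ}$ and the pairing nondegenerate. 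Once a cusp (Milnor number $\mu\geq 2$) exists, Beauville's result that the Milnor lattice of such a singularity embeds into $H_{d-1}(S_0,\ZZ)\van$ carrying vanishing cycles to vanishing cycles supplies the two cycles with intersection number one, as you anticipated. Without an argument of this kind your proposal has a genuine gap at its decisive step.
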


\begin{proof}
As observed in \cite{Janssen}*{p.~132}, it suffices to show that there is a singular
hyperplane section $S' \subseteq X$ with an isolated singularity that is not an
ordinary double point. Indeed, the vanishing homology of the Milnor fiber $F$ of such a
singularity embeds into $H_{d-1}(S_0, \ZZ) \van$ by \cite{Beauville}*{p.~9}, in such
a way that vanishing cycles map to vanishing cycles. The Milnor fiber has the
homotopy type of a bouquet of $(d-1)$-spheres; the number of spheres is the Milnor
number $\mu$ of the singular point. If the singularity is not an ordinary double
point, then $\mu \geq 2$, and so there are (at least) two independent vanishing cycles
on $F$ with intersection number one. We can then take $\delta_1$ and $\delta_2$ to be
their images in $H_{d-1}(S_0, \ZZ) \van$.

To find such a hyperplane section $S'$, let $\PPn{2} \subseteq P$ be a general plane
containing the base point, and $C = \PPn{2} \cap \dualX$. The curve $C$ is
irreducible, and its only singularities are nodes and cusps. A node of $C$
corresponds to a hyperplane section of $X$ with two ordinary double points; a cusp
corresponds to a hyperplane section with one isolated singularity of Milnor number
two. To prove the lemma, it is therefore enough to show that $C$ has at least one cusp.

By the more precise version of Zariski's theorem, the fundamental group of $\PPn{2}
\setminus C$ is isomorphic to $G$. If $C$ had only nodes and no cusps, then this
group would be Abelian \cite{Deligne}, and hence a finite cyclic group, since $C$ is
irreducible. In particular, the action of each vanishing cycle would be of finite
order. Since $d$ is even, this possibility is ruled out by our assumption that
$H^{d-1}(S_0, \QQ) \van \neq 0$.  
\end{proof}

\section{An application of Nori's Connectivity Theorem}
\label{sec:Nori}

H.~Clemens observed that one can obtain a much stronger result, namely that
\eqref{eq:step1} is an \emph{isomorphism}, from the Connectivity Theorem of M.~Nori
\cite{Nori}. For this to be true, it is necessary to assume that $X$ is of
sufficiently large degree in the ambient projective space. We give the proof of
Clemens' observation in this section.

As before, we let $d = \dim X$ be the dimension of the smooth projective variety
$X$. We also continue to write $\pism \colon \famXsm \to
\Psm$ for the family of all smooth hyperplane sections of $X$. 
Nori's Connectivity Theorem \cite{Voisin}*{Section~20.1} is the statement that the restriction map
\begin{equation} \label{eq:nori}
	H^k \bigl( \Psm \times X, \QQ \bigr) \to H^k \bigl( \famXsm, \QQ \bigr)
\end{equation}
is an isomorphism for all $k \leq 2d-3$, and injective for $k = 2d-2$, provided the
embedding of $X$ is of sufficiently high degree, which we assume from now on.

The following result is an useful consequence of Nori's theorem.

\begin{proposition}[Clemens] \label{prop:Nori}
The map
\[
	H^d(X, \QQ) \prim \to H^1 \bigl( \Psm, (R^{d-1} \pisml \QQ) \van \bigr),
\]
induced by the Leray spectral sequence for $\pism$, 
is an isomorphism if $d \geq 3$, and at least injective if $d = 2$.
\end{proposition}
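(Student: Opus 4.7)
The plan is to compare the Leray spectral sequences of the second projection $p_2 \colon \Psm \times X \to \Psm$ and of the smooth projective family $\pism \colon \famXsm \to \Psm$, related by the closed embedding $j \colon \famXsm \hookrightarrow \Psm \times X$ (so that $\pism = p_2 \circ j$). Both spectral sequences degenerate at $E_2$, by K\"unneth and by Deligne's theorem, respectively, with $E_2$-pages $H^p(\Psm, \QQ) \otimes H^q(X, \QQ)$ and $H^p(\Psm, R^q \pisml \QQ)$. The morphism of spectral sequences induced by $j^{\ast}$ comes from the restriction of local systems $H^q(X, \QQ) \to R^q \pisml \QQ$, whose stalks are the Lefschetz restrictions $\iu \colon H^q(X) \to H^q(S)$. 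Nori's Connectivity Theorem \eqref{eq:nori} will control the comparison on the abutments, forcing the dimensions on both sides to match.

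The relevant inputs from Lefschetz theory are as follows. For $q \leq d - 2$, the Lefschetz Hyperplane Theorem gives $H^q(X) \cong H^q(S_0)$, so the comparison is an isomorphism of $E_2^{p,q}$-terms in every such bidegree. Poincar\'e duality on the fibers (of complex dimension $d - 1$) yields $R^d \pisml \QQ \cong (R^{d-2} \pisml \QQ)^{\vee}$; since $R^{d-2} \pisml \QQ$ is constant with stalk $H^{d-2}(X)$, so is $R^d \pisml \QQ$, and $H^0(\Psm, R^d \pisml \QQ) \cong H^d(S_0)$. Under this identification the comparison at bidegree $(0, d)$ is precisely the restriction $\iu \colon H^d(X) \to H^d(S_0)$, and the Lefschetz decomposition $H^d(X) = L H^{d-2}(X) \oplus H^d(X) \prim$ together with hard Lefschetz on $S_0$ shows that $\iu$ is surjective with kernel $H^d(X) \prim$. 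Finally, the splitting $R^{d-1} \pisml \QQ = H^{d-1}(X) \oplus (R^{d-1} \pisml \QQ) \van$ combined with $H^1(\Psm, \QQ) = 0$ identifies $H^1(\Psm, R^{d-1} \pisml \QQ)$ with $H^1 \bigl( \Psm, (R^{d-1} \pisml \QQ) \van \bigr)$.

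Assume $d \geq 3$, so that $k = d \leq 2d - 3$ and Nori's theorem yields $H^d(\Psm \times X, \QQ) \cong H^d(\famXsm, \QQ)$. By degeneracy the total dimensions equal the sums of dimensions of the graded pieces of the respective Leray filtrations. The preceding paragraph tells us that these graded pieces agree in every bidegree $(p, q)$ with $q \leq d - 2$; at bidegree $(0, d)$ the source has dimension $\dim H^d(X)$ and the target dimension $\dim H^d(S_0)$; and at bidegree $(1, d - 1)$ the source vanishes (since $H^1(\Psm, \QQ) = 0$) while the target equals $H^1 \bigl( \Psm, (R^{d-1} \pisml \QQ) \van \bigr)$. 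Equating total dimensions forces
\[
\dim H^1 \bigl( \Psm, (R^{d-1} \pisml \QQ) \van \bigr) = \dim H^d(X) - \dim H^d(S_0) = \dim H^d(X) \prim.
\]
Combined with the injectivity of the map in question established in Section~\ref{sec:Lefschetz}, this upgrades the injection to an isomorphism. For $d = 2$ the conclusion is merely injectivity, which is already provided by Section~\ref{sec:Lefschetz}.

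The main technical point to nail down is the identification of $R^d \pisml \QQ$ as a constant local system and the verification that, under this identification, the $E_2^{0,d}$-piece of the comparison morphism is precisely $\iu$; once this is in place, the rest of the argument is a dimension count that uses Nori's theorem only in its isomorphism range at the single abutment $k = d$, so there is no need to push it beyond the hypothesis $d \geq 3$.
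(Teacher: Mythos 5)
Your argument is correct, but it extracts the conclusion from Nori's theorem differently than the paper does. The paper also compares the Leray spectral sequences of $\pism$ and of the projection $\Psm \times X \to \Psm$, but instead of counting dimensions it compares the two Leray filtrations level by level: for $p \geq 2$ the comparison maps $r_p$ on $L^p H^d$ are isomorphisms by the Hyperplane Theorem and the Five Lemma, the $p=1$ diagram identifies $\coker r_1$ with $H^1 \bigl( \Psm, (R^{d-1} \pisml \QQ) \van \bigr)$, and the $p=0$ diagram together with the Snake Lemma produces the map $\ker q_0 = H^d(X, \QQ) \prim \to \coker r_1$ directly, which is an isomorphism for $d \geq 3$ and injective for $d=2$ because Nori makes $r_0$ an isomorphism, resp.\ injective. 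That route is self-contained: it never invokes the Lefschetz-pencil injectivity of \eqref{eq:step1} from Section~\ref{sec:Lefschetz}, and in particular the $d=2$ case comes from Nori's injectivity at $k = 2d-2$ rather than from the pencil argument. Your proof instead uses Nori only at the single abutment $k=d$ in its isomorphism range, replaces the diagram chase by a bidegree-by-bidegree dimension count (using that $R^q \pisml \QQ$ is constant for $q \leq d-2$, that $R^d \pisml \QQ$ is constant by fiberwise Poincar\'e duality with $\iu \colon H^d(X, \QQ) \to H^d(S_0, \QQ)$ surjective of kernel $H^d(X, \QQ)\prim$, and that $H^1(\Psm, \QQ) = 0$), and then imports the injectivity of \eqref{eq:step1} from Section~\ref{sec:Lefschetz} to upgrade equal dimensions to an isomorphism. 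This is legitimate, and arguably more elementary at the cost of being less self-contained; the one point you should make explicit is that the injectivity you cite was proved under the standing assumption $H^{d-1}(S_0, \QQ)\van \neq 0$ of that section. This is harmless here, since the high-degree hypothesis required for Nori's theorem forces the vanishing cohomology to be nonzero (and for $d \geq 3$ your own count shows both sides vanish if it were zero), but it should be said; note also that your closing "technical point" about identifying the $E_2^{0,d}$-comparison with $\iu$ is not actually needed for the dimension count — only the constancy of $R^d \pisml \QQ$ and the surjectivity of $\iu$ in degree $d$ enter.
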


\begin{proof}
We consider the Leray spectral sequence for the smooth projective map $\pism$, whose
$E_2$-page is given by
\[
	E_2^{p,q} = H^p \bigl( \Psm, R^q \pisml \QQ \bigr) 
		\Longrightarrow H^{p+q} \bigl( \famXsm, \QQ \bigr).
\]
Writing $L^{\bullet}$ for the induced filtration on the cohomology of
$\famXsm$, we have for each $p \geq 0$ a short exact sequence
\begin{diagram}[l>=2em]
	L^{p+1} H^d \bigl( \famXsm, \QQ \bigr) &\rIntoBold& 
		L^p H^d \bigl( \famXsm, \QQ \bigr) &\rOnto&
		H^p \bigl( \Psm, R^{d-p} \pisml \QQ \bigr),
\end{diagram}
because the spectral sequence degenerates at the $E_2$-page by a theorem of Deligne's
\cite{Voisin}*{p.~379}.
A similar result is true for the projection map $\Psm \times X \to \Psm$;
since the Leray spectral sequence is functorial, we get for all $p \geq 0$ a
commutative diagram
\begin{diagram}[midshaft,l>=2em]
	L^{p+1} H^d \bigl( \Psm \times X \bigr) 
		&\rIntoBold& L^p H^d \bigl( \Psm \times X \bigr) &\rOnto&
		H^p \bigl( \Psm \bigr) \times H^{d-p}(X) \\
	 \dTo^{r_{p+1}} & & \dTo^{r_p} & & \dTo^{q_p}  \\
	L^{p+1} H^d \bigl( \famXsm \bigr) &\rIntoBold& L^p H^d \bigl( \famXsm \bigr) &\rOnto& 
		H^p \bigl( \Psm, R^{d-p} \pisml \QQ \bigr)
\end{diagram}
with exact rows. (All cohomology groups are with $\QQ$-coefficients.)

We now analyze those diagrams. For $p \geq 2$, the map $H^{d-p}(X, \QQ) \to
H^{d-p}(S, \QQ)$ is an isomorphism for every smooth hyperplane section $S = X \cap H$
by the Hyperplane Theorem; thus $q_p$ is an isomorphism for $p \geq 2$. An easy
induction, combined with the Five Lemma, shows that $r_p$ is therefore an isomorphism
for $p \geq 2$ as well.

For $p = 1$, we have
\begin{diagram}[midshaft,l>=2em]
	L^2 H^d \bigl( \Psm \times X \bigr) 
		&\rIntoBold& L^1 H^d \bigl( \Psm \times X \bigr) &\rOnto&
		H^1 \bigl( \Psm \bigr) \times H^{d-1}(X) \\
	 \dTo^{\simeq} & & \dTo^{r_1} & & \dTo^{q_1}  \\
	L^2 H^d \bigl( \famXsm \bigr) &\rIntoBold& L^1 H^d \bigl( \famXsm \bigr) &\rOnto& 
		H^1 \bigl( \Psm, R^{d-1} \pisml \QQ \bigr)
\end{diagram}
and since $R^{d-1} \pisml \QQ = H^{d-1}(X, \QQ) \oplus (R^{d-1} \pisml \QQ) \van$, we
obtain an isomorphism
\[
	\coker r_1 \simeq \coker q_1 \simeq 
		H^1 \bigl( \Psm, (R^{d-1} \pisml \QQ) \van \bigr)
\]

Finally, we consider the diagram for $p = 0$, where it becomes
\begin{diagram}[midshaft,l>=2em]
	L^1 H^d \bigl( \Psm \times X \bigr) 
		&\rIntoBold& H^d \bigl( \Psm \times X \bigr) &\rOnto&
		H^0 \bigl( \Psm \bigr) \times H^d(X) \\
	 \dTo^{r_1} & & \dTo^{r_0} & & \dTo^{q_0}  \\
	L^1 H^d \bigl( \famXsm \bigr) &\rIntoBold& H^d \bigl( \famXsm \bigr) &\rOnto& 
		H^0 \bigl( \Psm, R^d \pisml \QQ \bigr).
\end{diagram}
By Nori's theorem, the map $r_0$ is an isomorphism if $2d-3 \geq d$, equivalently
$d \geq 3$, and at least injective if $d=2$. Thus the map $\ker q_0 \to \coker r_1$,
given by the Snake Lemma, is an isomorphism for $d \geq 3$, and injective for $d =
2$. Since we clearly have $\ker q_0 = H^d(X, \QQ) \prim$, the assertion follows.
\end{proof}

\begin{remark}
When combined with the Lefschetz theorems and the isomorphism in \eqref{eq:step2},
Proposition~\ref{prop:Nori} implies that the cohomology
$H^{\ast}(X, \QQ)$ can be reconstructed from the following data (for $d \geq 2$):
\begin{enumerate}[label=(\roman{*}), ref=(\roman{*})]
\item The cohomology ring $H^{\ast}(S, \QQ)$ of a smooth hypersurface section
$S = X \cap H$ of sufficiently high degree.
\label{it:Nori-1}
\item The cohomology class $\iu \lbrack S \rbrack \in H^2(S, \QQ)$.
\label{it:Nori-2}
\item The representation of the fundamental group $G$ of $\Psm$ on 
the vector space $V_{\QQ} = H^{d-1}(S, \QQ) \van$, more precisely its first group cohomology 
$H^1(G, V)$.  
\label{it:Nori-3}
\end{enumerate}

To see this, note that the class in \ref{it:Nori-2} determines the decomposition
of $H^{d-1}(S, \QQ)$ into vanishing cohomology and $H^{d-1}(X, \QQ)$. Thus the entire
cohomology ring of $X$ can be recovered from \ref{it:Nori-1} and \ref{it:Nori-2} ,
with the exception of the group $H^d(X, \QQ) \prim$.  But according to
Proposition~\ref{prop:Nori}, we have
\[
	H^d(X, \QQ) \prim \simeq H^1 \bigl( \Psm, (R^{d-1} \pisml \QQ) \van \bigr)
		\simeq H^1 \bigl( G, V_{\QQ} \bigr),
\]
where the second isomorphism is the one in \eqref{eq:step2}. It would be interesting
to have a description of the ring structure on $H^{\ast}(X, \QQ)$ in this setting.
\end{remark}

\section{Detecting group cohomology classes: the odd case}
\label{sec:group-odd}

In this section, we show that the restriction map \eqref{eq:step3} is injective when
$d$ is odd. As it happens, this can be proved by using very little of the structure
of the vanishing cohomology, and so we shall treat the problem abstractly first.

\subsection*{Injectivity of the restriction map}

We consider a finite-dimensional $\QQ$-vector space $V_{\QQ}$ with a symmetric bilinear form $B
\colon V_{\QQ} \tensor V_{\QQ} \to \QQ$, and a finitely generated group $G$ acting on $V_{\QQ}$,
subject to the following two assumptions:
\begin{enumerate}
\item There are distinguished elements $e_1, \dotsc, e_n \in V_{\QQ}$ with $\pair{e_i}{e_i}
= 2$.
\item There are generators $g_1, \dotsc, g_n$ for $G$, such that
\[
	g_i \cdot v = v - \pair{v}{e_i} e_i
\]
for all $v \in V_{\QQ}$. 
\end{enumerate}
It follows that the action of $G$ preserves the bilinear form, and that each $g_i^2$
acts trivially. In this situation, the restriction map is injective.

\begin{proposition} \label{prop:group-odd}
Let $V_{\QQ}$ be a finite-dimensional $\QQ$-vector space with an action by a group $G$,
subject to the assumptions just stated. Then the restriction map
\[
	H^1 \bigl( G, V_{\QQ} \bigr) \to \prod_{g \in G} V_{\QQ} / (g - \id) V_{\QQ}
\]
is injective.
\end{proposition}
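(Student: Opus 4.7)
The plan is to start with a $1$-cocycle $\phi$ in the kernel of the restriction map, so that $\phi(g) \in (g - \id) V_{\QQ}$ for every $g \in G$, and to exhibit a single $v \in V_{\QQ}$ with $\phi = dv$. Because $B(e_i, e_i) = 2 \neq 0$, the image $(g_i - \id) V_{\QQ}$ equals $\QQ e_i$, and therefore $\phi(g_i) = c_i e_i$ for certain scalars $c_i \in \QQ$. Since both $\phi$ and any coboundary $dv$ are determined by their values on the generators $g_1, \ldots, g_n$ via the cocycle condition, it suffices to produce $v$ with $(g_i - \id) v = c_i e_i$ for every $i$, equivalently with $B(v, e_i) = -c_i$ for every $i$. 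By elementary duality, this linear system is solvable if and only if $\sum a_i c_i = 0$ for every tuple $(a_1, \ldots, a_n) \in \QQ^n$ with $\sum a_i e_i \in \operatorname{rad}(B)$.

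Before attacking that condition I would make a technical reduction to the case where $B$ is nondegenerate. Passing to the quotient $V' = V_{\QQ}/\operatorname{rad}(B)$, the cocycle $\phi$ descends to $\phi'$, still in the kernel of the analogous restriction map, and the induced form is nondegenerate with the same reflection formula for the $g_i$. Assuming the nondegenerate case has been handled, we have $\phi' = dv'$; lifting $v'$ to $v \in V_{\QQ}$, the difference $\psi := \phi - dv$ is a cocycle valued in $\operatorname{rad}(B) \subseteq V_{\QQ}^G$, hence a homomorphism $G \to \operatorname{rad}(B)$. But $\psi(g_i) \in \QQ e_i \cap \operatorname{rad}(B) = 0$ (since $e_i \notin \operatorname{rad}(B)$), so $\psi$ vanishes on the generators and hence on all of $G$, yielding $\phi = dv$. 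This reduces the proposition to showing $\sum a_i c_i = 0$ for every exact relation $\sum a_i e_i = 0$.

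The heart of the proof, and the main obstacle, is that final cocycle computation. For each exact relation the strategy is to produce a word $w$ in $g_1, \ldots, g_n$ whose image acts trivially on $V_{\QQ}$, so that the hypothesis forces $\phi(w) = 0$, and such that the standard expansion
\[
\phi(w) = \sum_{k} c_{i_k} \, g_{i_1} g_{i_2} \cdots g_{i_{k-1}} e_{i_k}
\]
collapses (via repeated use of the Picard-Lefschetz formula $g_j u = u - B(u, e_j) e_j$) to a nonzero scalar multiple of a fixed vector, the scalar being $\pm \sum a_i c_i$. The identity $\phi(w) = 0$ then forces $\sum a_i c_i = 0$. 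As a prototype, the $A_2$ relation $e_1 + e_2 - e_3 = 0$ gives $g_3 = g_1 g_2 g_1$ as reflections on $V_{\QQ}$, so $g_1 g_2 g_1 g_3$ acts trivially; a direct computation yields $\phi(g_1 g_2 g_1 g_3) = (c_1 + c_2 - c_3) e_3$, forcing $c_1 + c_2 - c_3 = 0$. The hardest step is the systematic production of such a witnessing word $w$ from an arbitrary linear dependence; this rests on exploiting the rigidity imposed by $B(e_i, e_i) = 2$ to translate each linear relation among the $e_i$'s into a group-theoretic identity among the generating reflections.
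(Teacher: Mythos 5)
Your preliminary reductions are fine: the identification $(g_i-\id)V_{\QQ}=\QQ e_i$, the duality criterion for solvability of $B(v,e_i)=-c_i$, the quotient by $\operatorname{rad}(B)$ (which is fixed pointwise, so the leftover cocycle is a homomorphism into the radical vanishing on generators), and the fact that a cocycle vanishing on generators vanishes identically. These steps correctly reduce the proposition to the statement that $\sum_i a_i c_i=0$ for every linear relation $\sum_i a_i e_i=0$. But that reduction is essentially an equivalent reformulation of the proposition, and it is precisely at this point that your argument stops: the ``systematic production of a witnessing word'' from an arbitrary rational linear dependence is not carried out, and it is the entire content of the result. The $A_2$ identity $s_{e_1+e_2}=s_1s_2s_1$ is special to root-system configurations; for a general rational relation $e_{p+1}=\sum_i c_i e_i$ among norm-$2$ vectors there is no evident group-theoretic identity among the corresponding reflections, and no mechanism is offered for finding one.

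There is also a structural reason to doubt that the route you propose can be completed as stated. You only use the hypothesis on two kinds of elements: the generators (giving $\phi(g_i)=c_ie_i$) and words acting trivially on $V_{\QQ}$ (giving $\phi(w)=0$). This is strictly weaker information than the hypothesis that $\phi(g)\in\operatorname{im}(g-\id)$ for \emph{every} $g\in G$; when the kernel of $G\to\Aut(V_{\QQ})$ is small, the trivially-acting words may impose no constraints beyond the automatic ones (e.g.\ $\phi(g_i^2)=0$ holds for free), and nothing you have written rules this out. The paper's proof avoids the problem by applying the kernel hypothesis to products that act \emph{nontrivially}: after normalizing so that $\phi(g_p\dotsm g_1)=0$ (using the hypothesis for that product), it applies the hypothesis to $g_{p+1}g_p\dotsm g_1$, expands via the recursion of Lemma~\ref{lem:expand}, and uses the numerical consequence $\transp{c}Sc=1$ of $B(e_{p+1},e_{p+1})=2$ to force the coefficient $\eta$ in $\phi(g_{p+1})=\eta\,e_{p+1}$ to vanish. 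No identity among the reflections is needed. If you want to salvage your outline, the fix is to feed the hypothesis on these nontrivially-acting products into your linear-algebra framework rather than searching for words in the kernel of the representation; as it stands, the proposal defers the essential difficulty and solves only a prototype case.
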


\begin{proof}
Let $\phi \in Z^1 \bigl( G, V_{\QQ} \bigr)$ represent an arbitrary class in the kernel of the
restriction map. This means that for every $g \in G$, there is some $v \in V_{\QQ}$ with
the property that $\phi(g) = gv - v$.  Of course, $v$ is allowed to depend on $g$.
To prove the asserted injectivity, we need to show that $\phi \in B^1 \bigl( G, V_{\QQ} \bigr)$.

We shall do this in two steps. Re-indexing the generators $g_1, \dotsc, g_n$ of $G$,
if necessary, we may assume that the vectors $e_1, \dotsc, e_p$ are linearly
independent, while $e_{p+1}, \dotsc, e_n$ are linearly dependent on $e_1, \dotsc,
e_p$. The \emph{first step} is to show that we can subtract from $\phi$ a suitable element of
$B^1 \bigl( G, V_{\QQ} \bigr)$ to get $\phi(g_i) = 0$ for $i = 1, \dotsc, p$.

By assumption, there is a vector $v \in V_{\QQ}$ such that
\[
	\phi(g_p \dotsm g_1) = g_p \dotsm g_1 v - v;
\]
after subtracting from $\phi$ the element $(g \mapsto gv - v) \in B^1(G, V_{\QQ})$, we have
$\phi(g_p \dotsm g_1) = 0$. Furthermore, for each $i = 1, \dotsc, p$, there is some
$v_i \in V_{\QQ}$ with
\[
	\phi(g_i) = g_i v_i - v_i = - \pair{v_i}{e_i} e_i = a_i e_i,
\]
where $a_i = - \pair{v_i}{e_i} \in \QQ$. According to Lemma~\ref{lem:expand} below,
we can write
\[
	\phi(g_p \dotsm g_1) = \sum_{k=1}^p b_k e_k,
\]
with coefficients $b_k$ that satisfy the recursive relations given in the lemma.
But $e_1, \dotsc, e_p$ are linearly independent, and therefore $b_1 = \dotsb = b_p =
0$. The relations imply that $a_1 = \dotsb = a_p = 0$, and so we obtain $\phi(g_i) = 0$ for $i = 1,
\dotsc, p$.

In the \emph{second step}, we show that $\phi$ is now actually zero.
For this, we only need to prove that $\phi(g_i) = 0$ for $i = p+1, \dotsc, n$,
because all the $g_i$ together generate $G$ and $\phi$ is a cocycle. By symmetry, it
obviously suffices to consider just $g_{p+1}$. Since $e_{p+1}$ is linearly dependent
on $e_1, \dotsc, e_p$, we can write
\[
	e_{p+1} = \sum_{i=1}^p c_i e_i
\]
for certain coefficients $c_i \in \QQ$; these are subject to the condition that
\[
	2 = \pair{e_{p+1}}{e_{p+1}} = \sum_{i,j=1}^p c_i \pair{e_i}{e_j} c_j.
\]
If we let $c$ be the column vector with coordinates $c_i$, and $E$ the symmetric $p \times
p$-matrix with entries $E_{ij} = \pair{e_i}{e_j}$, we can put the condition into
the form
\begin{equation} \label{eq:aEa}
	2 = \transp{c} E c.
\end{equation}

As before, there is a vector $v \in V_{\QQ}$ with $\phi(g_{p+1}) = g_{p+1} v - v = 
-\pair{v}{e_{p+1}} e_{p+1}$, and if we set $\eta = - \pair{v}{e_{p+1}} \in \QQ$, we have
\[
	\phi(g_{p+1}) = \eta \cdot e_{p+1} = \eta \cdot \sum_{j=1}^p c_j e_j.
\]
We may also find $w \in V_{\QQ}$ such that
\[
	\phi(g_{p+1} g_p \dotsm g_1) = g_{p+1} g_p \dotsm g_1 w - w.
\]
Now $\phi(g_i) = 0$ for $i = 1, \dotsc, p$, and so we get $\phi(g_{p+1} g_p \dotsm
g_1) = \phi(g_{p+1})$ from the fact that $\phi$ is a cocycle. Since $g_{p+1} e_{p+1}
= - e_{p+1}$, we calculate that
\begin{align*}
	- \eta \cdot e_{p+1} &= g_{p+1} \cdot \phi(g_{p+1}) 
	= g_{p+1} \cdot \bigl( g_{p+1} g_p \dotsm g_1 w - w  \bigr) \\
		&= \bigl( g_p \dotsm g_1 w - w \bigr) - \bigl( g_{p+1} w - w \bigr)
		= \bigl( g_p \dotsm g_1 w - w \bigr) + \pair{w}{e_{p+1}} e_{p+1}
\end{align*}
Let $x_i = - \pair{w}{e_i}$. An application of Lemma~\ref{lem:expand} to the cocycle
$(g \mapsto g w - w)$ shows that $g_p \dotsm g_1 w - w = \sum y_j e_j$, where
\begin{equation} \label{eq:recursive}
	y_1 = x_1  \qquad \text{and} \qquad  
	y_{k+1} = x_{k+1} - \sum_{i=1}^k E_{i,k+1} y_i.
\end{equation}
From our calculation, we now obtain a linear relation between $e_1, \dotsc, e_p$,
namely
\[
	- \eta \cdot \sum_{j=1}^p c_j e_j = \sum_{j=1}^p y_j e_j - 
		\sum_{i,j=1}^p c_i x_i c_j e_j.
\]
But $e_1, \dotsc, e_p$ are linearly independent, and we deduce that
\[
	\eta \cdot c_j = \sum_{i=1}^p c_i x_i c_j - y_j
\]
for all $j = 1, \dotsc, p$, which we can write as a vector equation
\begin{equation} \label{eq:eta}
	\eta \cdot c = \transp{c} x \cdot c - y.
\end{equation}

The recursive relations in \eqref{eq:recursive} for the $y_j$ can be put into
the form $x = S y$, where $S$ is a lower-triangular matrix with entries
\[
	S_{ij} = 
	\begin{cases} 
		E_{ij} \quad & \text{if $i > j$,} \\
		1      \quad & \text{if $i = j$,} \\
		0		 \quad & \text{if $i < j$.} \\
	\end{cases}
\]
But now $E = S + \transp{S}$, because $E$ is symmetric and its diagonal entries are
all equal to $2$. From \eqref{eq:aEa}, we find that
\[
	2 = \transp{c} E c = \transp{c} S c + \transp{c} \transp{S} c
		= 2 \transp{c} S c,
\]
and so $1 = \transp{c} S c$. Now apply $\transp{c} S$ to the equation in
\eqref{eq:eta}
to get
\[
	\eta = \transp{c} S c \cdot \eta = \transp{c} x \cdot \transp{c} S c 
		- \transp{c} S y = \transp{c} x - \transp{c} S y = 
		\transp{c} (x - Sy) = 0.
\]
This shows that $\phi(g_{p+1}) = \eta \cdot e_{p+1} = 0$, and we have our result.
\end{proof}

The following simple lemma was used twice during the proof. To keep it
general, we do not assume anything about the bilinear form; in this way, it also
applies to the even case in the next section.

\begin{lemma} \label{lem:expand}
Let $V_{\QQ}$ be a $\QQ$-vector space with a bilinear form $B \colon V_{\QQ} \tensor
V_{\QQ} \to \QQ$, and with an action by a group $G$. Assume that there are elements
$g_1, \dotsc, g_n$ of $G$, and vectors $e_1, \dotsc, e_n \in V_{\QQ}$, such that
$g_i v = v - \pair{v}{e_i} e_i$ holds for every $v \in V_{\QQ}$. Let $\phi \in Z^1
\bigl( G, V_{\QQ} \bigr)$ be a $1$-cocycle satisfying $\phi(g_i) = a_i e_i$ for all
$i$. Then we have 
\[
	\phi(g_n \dotsm g_1) = \sum_{k=1}^n b_k e_k,
\]
and the coefficients $b_k \in \QQ$ are determined by the recursive relations
\begin{equation} \label{eq:relations}
	b_1 = a_1 \qquad \text{and} \qquad
		b_{k+1} = a_{k+1} - \sum_{i=1}^k \pair{e_i}{e_{k+1}} b_i.
\end{equation}
\end{lemma}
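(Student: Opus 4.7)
The plan is to prove the formula by induction on $n$, using the cocycle identity for $\phi$ together with the explicit description of how each $g_i$ acts on $V_{\QQ}$.

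For the base case $n = 1$, the hypothesis gives $\phi(g_1) = a_1 e_1$ directly, so $b_1 = a_1$ as required. For the inductive step, I would assume the formula has been verified for $g_k \dotsm g_1$ with coefficients $b_1, \dotsc, b_k$ satisfying the stated recursion, and then expand $\phi(g_{k+1} g_k \dotsm g_1)$ using the $1$-cocycle relation $\phi(gh) = g \cdot \phi(h) + \phi(g)$:
\[
  \phi(g_{k+1} g_k \dotsm g_1)
  = g_{k+1} \cdot \phi(g_k \dotsm g_1) + \phi(g_{k+1})
  = g_{k+1} \cdot \Bigl( \sum_{i=1}^k b_i e_i \Bigr) + a_{k+1} e_{k+1}.
\]

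The key computation is then to apply the formula $g_{k+1} v = v - \pair{v}{e_{k+1}} e_{k+1}$ term by term. This gives
\[
  g_{k+1} \cdot \sum_{i=1}^k b_i e_i
  = \sum_{i=1}^k b_i e_i - \Bigl( \sum_{i=1}^k b_i \pair{e_i}{e_{k+1}} \Bigr) e_{k+1},
\]
so that combining with $a_{k+1} e_{k+1}$ produces
\[
  \phi(g_{k+1} \dotsm g_1)
  = \sum_{i=1}^k b_i e_i
    + \Bigl( a_{k+1} - \sum_{i=1}^k \pair{e_i}{e_{k+1}} b_i \Bigr) e_{k+1}.
\]
Reading off the coefficient of $e_{k+1}$ yields precisely the recursive relation for $b_{k+1}$ in \eqref{eq:relations}, closing the induction.

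I do not anticipate any genuine obstacle: no assumption is made on $B$ (in particular, neither symmetry nor non-degeneracy is required), and the action of $g_{k+1}$ is prescribed by a single explicit formula, so the induction is purely bookkeeping. The only point to watch is the direction of the cocycle identity (i.e.\@ that $\phi(gh) = g \cdot \phi(h) + \phi(g)$ rather than the variant with $g$ and $h$ swapped), since the stated recursion for $b_{k+1}$ pairs the new vector $e_{k+1}$ with the \emph{previous} $e_i$'s and not the other way around; this matches the convention recorded in the definition of $Z^1(G, M)$ earlier in the paper.
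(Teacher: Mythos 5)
Your proof is correct and is exactly the induction the paper has in mind (its proof of Lemma~\ref{lem:expand} is just the remark that it follows easily by induction on $n$). The base case, the use of the cocycle identity $\phi(gh) = g \cdot \phi(h) + \phi(g)$ with $g = g_{k+1}$, and the term-by-term application of $g_{k+1}v = v - \pair{v}{e_{k+1}}e_{k+1}$ all check out and reproduce the recursion \eqref{eq:relations}.
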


\begin{proof}
This is easily proved by induction on $n$.
\end{proof}

\begin{example} \label{ex:non-injective}
It should be pointed out that the restriction map \eqref{eq:restr-M} is in general not injective without
some assumptions on the $G$-module $M$. Here is a simple example of
this phenomenon.  Let $G = \ZZ^2$ be the free Abelian group on two generators, acting
on $M = \QQ^3$ by the two commuting matrices
\[
	A_1 = \begin{pmatrix}
		1 & 0 & 1 \\ 0 & 1 & 0 \\ 0 & 0 & 1\\ 
	\end{pmatrix}
	\qquad \text{and} \qquad
	A_2 = \begin{pmatrix}
		1 & 2 & 2 \\ 0 & 1 & 2 \\ 0 & 0 & 1\\ 
	\end{pmatrix}.
\]
Define $\phi \colon G \to M$ by the rule $\phi(a, b) = (a, 0, 0)$. One easily
verifies that $\phi$ gives a non-zero element in $H^1(G, M)$, but that it goes to zero
under the restriction map
\[
	H^1(G, M) \to \prod_{g \in G} M / (g - \id) M.
\]
Thus Proposition~\ref{prop:group-odd} does not remain true for arbitrary representations.
\end{example}

\subsection*{Conclusion of the argument}

Proposition~\ref{prop:group-odd} can now be applied to the vanishing cohomology
$V_{\QQ} = H^{d-1}(S_0, \QQ) \van$ to show that \eqref{eq:step3} is injective when
$d$ is odd.
Indeed, it is clear from the results in Section~\ref{sec:Lefschetz} that
$V_{\QQ}$ satisfies all the assumptions of the proposition, if we set $\pair{u}{v} =
\eps_d (u, v)$. We can take for $e_1, \dotsc, e_n$ the vanishing cycles in an
arbitrary Lefschetz pencil on $X$, and for $g_1, \dotsc, g_n$ the corresponding
generators of the fundamental group. The identity $g_i v = v - \pair{v}{e_i} e_i$ is
then simply the Picard-Lefschetz formula \eqref{eq:Picard-Lefschetz}.
We conclude that \eqref{eq:step3} is injective when $V_{\QQ}$ is the vanishing
cohomology. This completes the proof that \eqref{eq:map-main} is injective when
the dimension of $X$ is odd.

\section{Detecting group cohomology classes: the even case}
\label{sec:group-even}

The purpose of this section is to prove that the restriction map
\begin{equation} \label{eq:inj-even}
	V_{\QQ} \to
		\prod_{g \in G} V_{\QQ} / (g - \id) V_{\QQ}
\end{equation}
is also injective when $d = \dim X$ is even. This is more subtle than in the case of odd
$d$, and we will need to use the fact that the vanishing cohomology and the monodromy
action can all be defined over $\ZZ$. The integral vanishing cohomology $H^{d-1}(S_0,
\ZZ) \van$, modulo torsion, is an example of a skew-symmetric vanishing lattice
\cite{Janssen}. We therefore have to begin by reviewing some results about the structure of
skew-symmetric vanishing lattices, due to W.~Janssen. 

\subsection*{Skew-symmetric vanishing lattices}

Let $V$ be a free $\ZZ$-module of finite rank, with an alternating bilinear form $B
\colon V \tensor V \to \ZZ$. Let $\Sp(V)$ be the group of all automorphisms of $V$
that preserve $B$. For every element $v \in V$, we can define a symplectic
transvection $T_v \in \Sp(V)$ by the formula $T_v(x) = x - \pair{x}{v} v$. 
With the monodromy representation on $H^{d-1}(S_0, \ZZ)$ and the facts in
Section~\ref{sec:Lefschetz} in mind, we are interested
in subgroups of $\Sp(V)$ generated by transvections. Given a subset $\Delta \subseteq
V$, we write $\Gamma_{\Delta}$ for the subgroup of $\Spsh(V)$ generated by all
$T_{\delta}$, for $\delta \in \Delta$.

As a matter of fact, all transvections are contained in a (potentially smaller) group
$\Spsh(V)$, which we now define. The form induces a linear map $j \colon V \to
\Hom(V, \ZZ)$, given by the rule $j(v) = \pair{v}{\argbl}$; in general, it is neither
injective nor surjective without further assumptions on $B$. Now $\Sp(V)$ naturally
acts on the dual module $\Hom (V, \ZZ)$ as well, by setting $(g \lambda)(x) =
\lambda(g^{-1} x)$ for $x \in V$ and $\lambda \in \Hom(V, \ZZ)$, and the map $j$ is
equivariant.  We let $\Spsh(V)$ be the subgroup of those $g \in \Sp(V)$ that act
trivially on $\Hom(V, \ZZ) / j(V)$.  Concretely, this means that
\[
\begin{split}
	\Spsh(V) = \menge{g \in \Sp(V)}{&\text{for any $\lambda \in \Hom(V, \ZZ)$, there
		exists $v \in V$} \\
	&\text{such that $\lambda(gx - x) = \pair{v}{x}$ for all $x \in V$}}.
\end{split}
\]
It is easy to see that $T_v \in \Spsh(V)$; each $\Gamma_{\Delta}$ is
therefore a subgroup of $\Spsh(V)$.

We now come to the main definition. A (skew-symmetric) \define{vanishing lattice} in
$V$ is a subset $\Delta \subseteq V$ with the following three properties:
\begin{enumerate}
\item The set $\Delta$ generates $V$.
\item $\Delta$ is a single orbit under the action of $\Gamma_{\Delta}$.
\item There exist two elements $\delta_1, \delta_2 \in \Delta$ such that
$\pair{\delta_1}{\delta_2} = 1$.
\end{enumerate}
In that case, $\Gamma_{\Delta}$ is called the \define{monodromy group} of the vanishing
lattice. 

W.~Janssen has completely classified vanishing lattices. One of his main technical
results is the following theorem.

\begin{theorem}[\cite{Janssen}*{Theorem~2.5}] \label{thm:Janssen}
Let $\Delta \subseteq V$ be a vanishing lattice. Then the monodromy group
of $\Delta$ contains the congruence subgroup
\[
	\Sptsh(V) = \menge{g \in \Sp(V)}{\text{$g$ acts trivially on $\Hom(V, \ZZ) /
		j(2V)$}}.
\]
In particular, $\Gamma_{\Delta}$ is itself of finite index in $\Spsh(V)$.
\end{theorem}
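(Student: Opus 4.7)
The plan is to show $\Gamma_{\Delta} \supseteq \Sptsh(V)$ by producing enough symplectic transvections inside $\Gamma_{\Delta}$ to hit a known set of generators for this congruence subgroup. The argument breaks into three stages: extract $\SL_2(\ZZ)$-subgroups from hyperbolic pairs; expand the set of transvections via conjugation and commutators; and then reduce to a classical generation theorem for $\Sptsh(V)$.

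In the first stage I would exploit the hyperbolic pair: by hypothesis there exist $\delta_1, \delta_2 \in \Delta$ with $\pair{\delta_1}{\delta_2} = 1$, so the plane $H = \ZZ \delta_1 \oplus \ZZ \delta_2$ is a symplectic summand on which the restrictions of $T_{\delta_1}$ and $T_{\delta_2}$ are the standard Dehn-twist generators of $\SL_2(\ZZ)$, while on the symplectic orthogonal of $H$ they act trivially. Hence $\Gamma_{\Delta}$ contains a copy of $\SL_2(\ZZ)$ concentrated on $H$. In the second stage I introduce $\Delta^{\ast} = \{v \in V : T_v \in \Gamma_{\Delta}\}$; since $g T_v g^{-1} = T_{gv}$, this set is $\Gamma_{\Delta}$-stable, and by the orbit axiom every element of $\Delta$ thereby acquires a hyperbolic partner in $\Delta^{\ast}$. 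Using the identity $u + n v \in \Delta^{\ast}$ whenever $u, v \in \Delta^{\ast}$ with $\pair{u}{v} = \pm 1$ and $n \in \ZZ$ (a direct consequence of working in the local $\SL_2(\ZZ)$-subgroups from stage one), and invoking the spanning axiom to chain such moves together, I would show that $\Delta^{\ast}$ eventually covers every $\Gamma_{\Delta}$-orbit needed to hit a generating set of $\Sptsh(V)$, and in particular that $\Delta^{\ast}$ contains $2V$.

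In the third stage I would invoke a classical generation theorem identifying $\Sptsh(V)$ as generated by symplectic transvections whose defining vectors meet the cosets of $2V$ with the appropriate symplectic relations; by stage two every such generator lies in $\Gamma_{\Delta}$, giving $\Gamma_{\Delta} \supseteq \Sptsh(V)$. The finite-index clause is then automatic, since $\Sptsh(V)$ is cut out inside $\Spsh(V)$ by a congruence condition modulo $2$. The principal obstacle is the second stage: the systematic combinatorial production of new transvections from old via commutator identities, where all three defining properties of a vanishing lattice are used simultaneously — the spanning axiom to reach every coset, transitivity to propagate $\SL_2(\ZZ)$-moves around $V$, and the hyperbolic pair to initiate them. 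The first and third stages are essentially structural, whereas the second is the genuinely delicate heart of the argument, and is the reason the theorem fails as soon as any one of the three vanishing-lattice axioms is weakened.
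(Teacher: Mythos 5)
The paper does not prove this statement at all: it is quoted verbatim from Janssen's paper (Theorem~2.5 there), and the present article only uses it as a black box. So the only meaningful comparison is with Janssen's own argument, and against that standard your proposal has a genuine gap exactly where you yourself locate ``the delicate heart.'' Stage one is fine but easy (the two transvections of a hyperbolic pair do generate a copy of $\SL_2(\ZZ)$ supported on $H=\ZZ\delta_1\oplus\ZZ\delta_2$ and act trivially on the $B$-orthogonal complement), and the elementary closure properties of $\Delta^{\ast}=\{v: T_v\in\Gamma_{\Delta}\}$ that you list are correct: it is $\Gamma_{\Delta}$-stable, and $u\mp nv=T_v^n(u)\in\Delta^{\ast}$ when $u,v\in\Delta^{\ast}$ and $\pair{u}{v}=\pm1$. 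But no argument is given that these moves reach $2V$, or any specified generating set of $\Sptsh(V)$: the moves give you nothing when two vectors pair to $0$ or to an even number, and when $B$ is degenerate (which is allowed here --- $j$ need be neither injective nor surjective) they never touch the radical, so the assertion that $\Delta^{\ast}$ ``eventually covers every orbit needed'' is precisely the theorem, not a step toward it.

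Stage three compounds this: there is no ``classical generation theorem'' one can cite saying that $\Sptsh(V)$, for an arbitrary (possibly degenerate, non-unimodular) skew form over $\ZZ$, is generated by the transvections it contains together with a prescribed matching of defining vectors to cosets of $2V$. Identifying which transvections lie in $\Gamma_{\Delta}$ and showing that they generate the congruence subgroup is essentially the content of Janssen's Theorem~2.5 itself; his proof is a substantial induction on the rank, using structural lemmas about vanishing lattices (e.g.\ his Lemma~2.7, quoted elsewhere in this paper) and earlier work of Wajnryb and A'Campo on the monodromy of specific singularities, not a reduction to a known presentation of $\Sptsh(V)$. As written, your plan therefore defers the entire difficulty to two unproved claims and is circular at the crucial point. (A minor additional caveat: the ``automatic'' finite-index clause needs a word of justification, since $\Hom(V,\ZZ)/j(2V)$ is in general an infinite group; one should instead observe that $\Spsh(V)/\Sptsh(V)$ injects into a finite group such as $\Hom\bigl(\Hom(V,\ZZ)\otimes\ZZ/2,\, V/2V\bigr)$.)
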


We shall now use Janssen's theorem to show that $\Gamma_{\Delta}$ contains a finite-index
subgroup with a particularly convenient set of generators. This is crucial
in proving the injectivity of \eqref{eq:inj-even}. 

\begin{lemma} \label{lem:finite-index}
Let $V$ be a free $\ZZ$-module of rank $r$, and let $\Delta \subseteq V$ be a
vanishing lattice. Then it is possible to find $r$ linearly independent elements
$\delta_1, \dotsc, \delta_r \in \Delta$, such that the group $\Gamma_{\{ \delta_1,
\dotsc, \delta_r \}}$ has finite index in $\Gamma_{\Delta}$.
\end{lemma}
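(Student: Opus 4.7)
My strategy is to apply Janssen's Theorem~\ref{thm:Janssen} not directly to $\Delta$, but to a carefully chosen vanishing lattice inside a full-rank sublattice $V_0 \subseteq V$, and then to compare principal congruence subgroups inside $\Sp(V_\QQ)$.

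The first step is to choose the $r$ vectors. By axiom~(3) of a vanishing lattice I pick $\delta_1,\delta_2 \in \Delta$ with $B(\delta_1,\delta_2)=1$. By axiom~(1), $\Delta$ generates $V$, so I can extend iteratively to a $\QQ$-linearly independent set $\{\delta_1,\dots,\delta_r\} \subseteq \Delta$, with the additional requirement that each new $\delta_i$ (for $i \geq 3$) lies in the $\Gamma_{\{\delta_1,\dots,\delta_i\}}$-orbit of $\delta_1$. Concretely, if one chooses $\delta_i$ so that $B(\delta_i, \delta_j) = \pm 1$ for some previously picked $\delta_j$, then a short computation with $T_{\delta_j} T_{\delta_i} \delta_j$ shows that $\delta_i$ (up to sign and addition of an integer multiple of $\delta_j$) lies in the orbit of $\delta_1$. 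Such a $\delta_i \in \Delta$ outside the previous $\QQ$-span exists at each stage because $\Delta$ is a single $\Gamma_\Delta$-orbit spanning $V$ and the monodromy group is large enough (it contains $\Sptsh(V)$ by Janssen applied to $\Delta$) to realize the required pairings.

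Once the $\delta_i$'s are selected, set $V_0 = \bigoplus_{i=1}^r \ZZ \delta_i$, a sublattice of finite index $N = [V:V_0]$ in $V$. Since $T_{\delta_i}(\delta_j) = \delta_j - B(\delta_j,\delta_i)\delta_i \in V_0$, the group $\Gamma_0 = \Gamma_{\{\delta_1,\dots,\delta_r\}}$ preserves $V_0$. The subset $\Delta_0 = \Gamma_0 \cdot \{\delta_1,\dots,\delta_r\} \subseteq V_0$ then satisfies all three axioms of a skew-symmetric vanishing lattice in $V_0$: it generates $V_0$ (since $\delta_1,\dots,\delta_r$ form a $\ZZ$-basis), it is a single $\Gamma_0$-orbit by construction, and $\delta_1,\delta_2 \in \Delta_0$ pair to $1$. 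Janssen's Theorem~\ref{thm:Janssen} applied to $\Delta_0 \subseteq V_0$ therefore yields $\Gamma_0 \supseteq \Sptsh(V_0)$.

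For the finite-index comparison, observe that any element of $\Sp(V,\ZZ)$ congruent to the identity modulo $2N$ preserves $V_0$ and acts on it as an element of $\Sptsh(V_0)$; the principal congruence subgroup $\Gamma(2N) \subseteq \Sp(V,\ZZ)$ is therefore contained in $\Gamma_0$. On the other hand, Janssen's theorem applied to $\Delta$ itself gives $\Gamma_\Delta \supseteq \Sptsh(V)$, so $\Gamma_\Delta$ contains $\Gamma(2N)$ as a finite-index subgroup. Combining, $\Gamma_0$ has finite index in $\Gamma_\Delta$. The main obstacle is the inductive construction in the first step: ensuring the $r$-element set is simultaneously linearly independent and a single $\Gamma_0$-orbit, which is where the interplay of all three vanishing-lattice axioms becomes essential.
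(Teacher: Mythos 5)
Your overall architecture is the same as the paper's (span a finite-index sublattice $V_0$ by $r$ independent elements of $\Delta$ lying in a single $\Gamma_0$-orbit, apply Theorem~\ref{thm:Janssen} to the induced vanishing lattice in $V_0$, then compare $\Gamma_0$ with $\Gamma_\Delta$ inside $\Sp(V)$), but your first step has a genuine gap. You need, at every stage, an element of $\Delta$ outside the span of the previously chosen vectors that pairs to $\pm 1$ with one of them, and your only justification is that the monodromy group is ``large enough to realize the required pairings.'' That is not an argument: all elements of $\Gamma_\Delta$ (and of $\Sptsh(V)$) preserve $B$, and nothing you have said rules out that every $\delta\in\Delta$ outside the current span pairs with the chosen vectors only in values different from $\pm 1$. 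What you actually need is Janssen's Lemma~2.7 (\cite{Janssen}*{Lemma~2.7}), cited at exactly this point in the paper's proof: $V$ is generated by $\delta_1$ together with the $\delta\in\Delta$ satisfying $B(\delta_1,\delta)=1$. With that, one takes all of $\delta_2,\dotsc,\delta_r$ pairing to $1$ with $\delta_1$, and the single-orbit property is the one-line computation $T_{\delta_i}T_{\delta_1}(\delta_i)=\delta_1$. (Your ``$\pm1$'' version of the orbit step is fine, since $B(\alpha,\beta)=1$ gives $T_\beta T_\alpha(\beta)=\alpha$; the unproved point is purely the existence of the next $\delta_i$.)

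The finite-index comparison also contains an error, though a repairable one, and here you genuinely diverge from the paper. The claim that $g\equiv\id \bmod 2N$ forces $g(V_0)=V_0$ and $g|_{V_0}\in\Sptsh(V_0)$ is false in general: membership in $\Sptsh(V_0)$ requires the functionals $\lambda'\circ(g-\id)$ to lie in $j(2V_0)$, and congruence modulo $2N$ only makes them divisible by $2$; it does not put them in the image of $j$, whose cokernel is nontrivial whenever $B|_{V_0}$ fails to be unimodular (its discriminant is $N^2$ times that of $B$, so this always fails for $N>1$ when $B$ is nondegenerate). Concretely, let $V=\ZZ^2$ with $B(e_1,e_2)=1$, $V_0=\ZZ e_1+\ZZ\,2e_2$ (so $N=2$), and $g(e_1)=e_1+4e_2$, $g(e_2)=e_2$: then $g\in\Sp(V)$ and $g\equiv\id\bmod 4$, but for $\lambda'$ dual to $2e_2$ one would need $v'\in V_0$ with $2B(v',e_1)=\lambda'(ge_1-e_1)=2$, i.e.\ $B(v',e_1)=1$, which is impossible because $B(v',e_1)$ is even for every $v'\in V_0$; so $g|_{V_0}\notin\Sptsh(V_0)$. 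The same objection applies to ``$\Gamma(2N)\subseteq\Sptsh(V)$.'' The repair is to take a deeper principal congruence level $M$, divisible by $N$ and chosen so that $\tfrac{M}{2N}\Hom(V_0,\ZZ)\subseteq j(V_0)$ (possible when $B$ is nondegenerate, as in the geometric application); then $\Gamma(M)|_{V_0}\subseteq\Sptsh(V_0)\subseteq\Gamma_0|_{V_0}$, restriction to the finite-index sublattice $V_0$ is injective on $\Aut(V)$, hence $\Gamma(M)\subseteq\Gamma_0$, and $\Gamma_\Delta\cap\Gamma(M)$ has finite index in $\Gamma_\Delta$ (you never need $\Gamma(2N)\subseteq\Gamma_\Delta$ itself). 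Once corrected, this congruence-subgroup comparison is a legitimate alternative to the paper's second half, which instead extends functionals from $V'$ to $V$ and uses a pigeonhole argument to show that a bounded power of every element of $\Spsh(V)$ lands in $\Sptsh(V')$.
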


\begin{proof}
Pick an arbitrary element $\delta_1 \in \Delta$. Since $\Delta$ is a vanishing
lattice, \cite{Janssen}*{Lemma~2.7} shows that $V$ is already generated by the
smaller set
\[
	\Delta_1 = \menge{\delta \in \Delta}{\text{$\pair{\delta_1}{\delta} = 1$ or
		$\delta = \delta_1$}}.
\]
We can therefore find $r$ linearly independent elements $\delta_1, \dotsc, \delta_r
\in \Delta$ that satisfy $\pair{\delta_1}{\delta_i} = 1$ for $i \geq 2$. Let $V'
\subseteq V$ be their span; then $V'$ is free of rank $r$, and the quotient is $V/V'$
is finite, say of order $k$.

Let $T_i = T_{\delta_i}$ be the corresponding transvections, and $\Gamma' =
\Gamma_{\{ \delta_1, \dotsc, \delta_r \}}$ the group generated by them. Note that
each $T_i$ actually preserves $V'$, which allows us to think of $\Gamma'$ as a
subgroup of $\Sp(V')$ where convenient. For $i \geq 2$, we have
\[
	T_i T_1 ( \delta_i ) = T_i ( \delta_i + \delta_1 ) = 
	\delta_i + \delta_1 - \pair{\delta_i + \delta_1}{\delta_i} \delta_i = \delta_1,
\]
and so all the $\delta_i$ lie in one orbit of the group $\Gamma'$. It follows that
$\Delta' = \Gamma' \cdot \{ \delta_1, \dotsc, \delta_r \}$ is itself a vanishing lattice in
$V'$. Theorem~\ref{thm:Janssen}, applied to $\Delta' \subseteq V'$, shows that $\Gamma'$
contains the subgroup 
\[
\begin{split}
	\Sptsh(V') = \menge{h \in \Sp(V')}{&\text{for any $\lambda' \in \Hom(V', \ZZ)$, there
		exists $v' \in V'$} \\
	&\text{such that $\lambda'(hx - x) = 2\pair{v'}{x}$ for all $x \in V'$}}.
\end{split}
\]

Now let $g \in \Spsh(V)$ be an arbitrary element. We shall prove that a fixed power
of $g$ preserves $V'$ and belongs to $\Sptsh(V')$.
In particular, this will show that $\Gamma'$ has finite index in $\Gamma$.

To begin with, let us assume that $g(V') = V'$. We then need to find a fixed integer
$m \geq 1$, such that $g^m$ belongs to $\Sptsh(V')$. In other words, given any
$\lambda' \in \Hom(V', \ZZ)$, there should exist a vector $v' \in V'$ such that
$\lambda'(g^m x - x) = 2\pair{v'}{x}$.
So let $\lambda' \in \Hom(V', \ZZ)$ be an arbitrary linear functional. 
Since $V/V'$ is of order $k$, the same is true for the group $\Ext^1(V/V', \ZZ)$.
The exact sequence
\begin{diagram}
	0 &\rTo& \Hom_{\ZZ}(V, \ZZ) &\rTo& \Hom_{\ZZ}(V', \ZZ) &\rTo& 
		\Ext_{\ZZ}^1 (V/V', \ZZ) &\rTo& 0
\end{diagram}
shows that the multiple $\lambda = k \lambda'$ extends uniquely to a linear
functional on all of $V$. By assumption, $g$ is an element of $\Spsh(V)$, and so
there is a vector $v \in V$ with the property that $\lambda(gx - x) = \pair{v}{x}$
for all $x \in V$. Let $w = k v \in V'$; then we have in particular that
\begin{equation} \label{eq:gx}
	k^2 \lambda'(gx - x) = \pair{w}{x}
\end{equation}
for all $x \in V'$.  After repeated application of \eqref{eq:gx}, we obtain
\begin{equation} \label{eq:gmx}
\begin{split}
	k^2 \lambda'(g^m x - x) & = \pair{w}{x + gx + \dotsb + g^{m-1} x} \\
		&= \pair{w + g^{-1} w + \dotsb + g^{-m+1} w}{x}.
\end{split}
\end{equation}
We thus need to choose $m$ so that $w + g^{-1} w + \dotsb + g^{-m+1} w \in 2 k^2 V'$.

The quotient $V' / 2 k^2 V'$ has finite order. By the Pigeon Hole Principle, there
exists a fixed integer $p \geq 1$ such that $g^{-p} w \equiv w \mod 2 k^2 V'$. Taking
$m = 2k^2 p$, we see that
\[
	w + g^{-1} w + \dotsb + g^{-m+1} w \equiv
	2k^2 \bigl( w + g^{-1} w + \dotsb + g^{-p+1} w \bigr) \equiv 0
	\mod 2 k^2 V'.
\]
Let $v' \in V'$ be any vector for which $w + g^{-1} w + \dotsb + g^{-m+1} w = 2 k^2
v'$. Then \eqref{eq:gmx} shows that
\[
	k^2 \lambda'(g^m x - x) = 2 k^2 \pair{v'}{x}.
\]
Since this is an identity in $\ZZ$, it follows that $\lambda'(g^m x - x) = 2
\pair{v'}{x}$ for every $x \in V'$. In other words, $g^m \in \Sptsh(V')$, which is
the result we were after, but with the extra assumption that $g(V') = V'$.

We now consider an arbitrary element $g \in \Spsh(V)$. Then $g(V') \subseteq V$ is
again a submodule of index $k$; since there are only finitely many such, we can find a
fixed positive integer $q$ such that $g^q(V') = V'$. The preceding analysis now applies,
and shows that $g^{mq} \in \Sptsh(V')$. Consequently, the index of $\Gamma'$ in
$\Gamma$ is at most $mq$.
\end{proof}

\subsection*{Injectivity of the restriction map}

In the presence of a vanishing lattice, it is again possible to prove the injectivity
of the restriction map by a fairly simple argument. The most convenient setting is the
following. Let $V$ be a free $\ZZ$-module of finite
rank, with an alternating bilinear form $B \colon V \tensor V \to \ZZ$. Let $G$ be a finitely
generated group acting on $V$, and assume that there are generators $g_1, \dotsc,
g_n$ for $G$, and distinguished elements $e_1, \dotsc, e_n$ of $V$, such that
\[
	g_i v = v - \pair{v}{e_i} e_i = T_{e_i}(v)
\]
for all $i$. Furthermore, assume that $\Delta = G \cdot \{ e_1, \dotsc, e_n \}$
is a vanishing lattice in $V$. Of course, the image of $G$ in $\Spsh(V)$ is
then exactly the monodromy group $\Gamma_{\Delta}$.

\begin{proposition} \label{prop:group-even}
Let $V$ be a free $\ZZ$-module of finite rank with a $G$-action, subject to the 
assumptions above. Let $V_{\QQ} = V \tensor \QQ$. Then the restriction map
\[
	H^1 \bigl( G, V_{\QQ} \bigr) \to \prod_{g \in G} V_{\QQ} / (g - \id) V_{\QQ}
\]
is injective.
\end{proposition}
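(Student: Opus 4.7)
The strategy is to reduce to a finite-index subgroup of $G$ whose image in $\Sp(V)$ is generated by transvections along \emph{linearly independent} vanishing cycles, and then run an argument parallel to the proof of Proposition \ref{prop:group-odd}. The main obstacle, compared with the odd case, is that the second step of the odd argument depends essentially on the Gram matrix of the generating vectors being symmetric with $2$'s on the diagonal (through the identity $E = S + \transp{S}$), and this has no direct analogue when $B$ is alternating; Lemma \ref{lem:finite-index} sidesteps the difficulty by letting us replace $G$ by a finite-index subgroup whose transvection generators are indexed by a basis of $V_{\QQ}$, so that the linearly dependent case never arises.

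First I would apply Lemma \ref{lem:finite-index} to choose linearly independent $\delta_1, \ldots, \delta_r \in \Delta$ (with $r = \rk V$) such that $\Gamma' = \Gamma_{\{\delta_1, \ldots, \delta_r\}}$ has finite index in the monodromy group $\Gamma_{\Delta}$. Writing $\pi \colon G \to \Gamma_{\Delta}$ for the representation, let $\tilde{H} = \pi^{-1}(\Gamma')$, a finite-index subgroup of $G$. Since $V_{\QQ}$ is a $\QQ$-vector space, the standard identity $\mathrm{cor} \circ \mathrm{res} = [G : \tilde{H}]$ makes the restriction map $H^1(G, V_{\QQ}) \to H^1(\tilde{H}, V_{\QQ})$ injective, so it suffices to show that any cocycle $\phi \in Z^1(G, V_{\QQ})$ lying in the kernel of \eqref{eq:inj-even} has $\phi|_{\tilde{H}} \in B^1(\tilde{H}, V_{\QQ})$.

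Next I would control $\phi$ on $K = \ker \pi$. Because $K$ acts trivially on $V_{\QQ}$, the hypothesis $\phi(k) \in (k - \id) V_{\QQ} = 0$ forces $\phi|_K = 0$. Pick lifts $h_i \in G$ of $T_{\delta_i}$; then $\tilde{H}$ is generated by $K$ together with $h_1, \ldots, h_r$. By the kernel hypothesis there exists $v \in V_{\QQ}$ with $\phi(h_r \cdots h_1) = (h_r \cdots h_1) v - v$, and after subtracting the coboundary $g \mapsto g v - v$ (which vanishes on $K$, so preserves $\phi|_K = 0$) we may assume $\phi(h_r \cdots h_1) = 0$.

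The final step then closely mirrors Proposition \ref{prop:group-odd}. The vanishing-lattice axiom forces $B$ to be nondegenerate on $V$, so $(h_i - \id) V_{\QQ} = \QQ \delta_i$, and the kernel hypothesis gives $\phi(h_i) = a_i \delta_i$ for scalars $a_i \in \QQ$. Lemma \ref{lem:expand} then expresses
\[
\phi(h_r \cdots h_1) = \sum_{k=1}^r b_k \delta_k
\]
by the same recursion $b_1 = a_1$, $b_{k+1} = a_{k+1} - \sum_{i \leq k} B(\delta_i, \delta_{k+1}) b_i$ as in the odd case. Since the $\delta_k$ are linearly independent and the sum is zero, each $b_k = 0$, and the recursion forces $a_1 = \cdots = a_r = 0$. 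Combined with $\phi|_K = 0$, the adjusted $\phi$ vanishes on a generating set of $\tilde{H}$ and hence on all of $\tilde{H}$, so $\phi|_{\tilde{H}}$ is cohomologous to zero, completing the proof.
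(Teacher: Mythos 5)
Your argument is correct, and its core coincides with the paper's: both rest on Lemma~\ref{lem:finite-index} to produce transvections $T_{\delta_1},\dotsc,T_{\delta_r}$ along a \emph{linearly independent} set, and then on Lemma~\ref{lem:expand} plus the kernel hypothesis (which gives $\phi(h_i)\in(h_i-\id)V_{\QQ}\subseteq\QQ\delta_i$) to kill the cocycle on these generators after one coboundary adjustment. Where you genuinely diverge is in how you handle the two reductions surrounding this core. First, the paper descends to the monodromy group: it notes $\phi$ vanishes on $N=\ker(G\to\Gamma)$ and works with a cocycle on $\Gamma$, whereas you stay in $G$ and work with the preimage $\tilde H=\pi^{-1}(\Gamma')$, observing that $\tilde H$ is generated by $K=\ker\pi$ together with lifts $h_i$ of the $T_{\delta_i}$; these are essentially equivalent bookkeeping choices. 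Second, and more substantively, to pass from the finite-index subgroup to the whole group the paper argues by hand: for each $\delta\in\Delta$ the kernel hypothesis gives $\phi(T_\delta)\in\QQ\delta$, the transvection $T_\delta$ fixes $\delta$, so $\phi(T_\delta^m)=m\,\phi(T_\delta)$, and a suitable power of $T_\delta$ lies in $\Gamma'$, forcing $\phi(T_\delta)=0$ for every $\delta\in\Delta$ and hence $\phi=0$ on $\Gamma$. You instead invoke the restriction--corestriction identity $\mathrm{cor}\circ\mathrm{res}=[G:\tilde H]$, which makes $H^1(G,V_{\QQ})\to H^1(\tilde H,V_{\QQ})$ injective because $V_{\QQ}$ is a $\QQ$-vector space and the index is finite. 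This is a perfectly valid and somewhat slicker finish: it needs the kernel hypothesis only on $K$, the $h_i$, and the single product $h_r\dotsm h_1$, while the paper's finish is more elementary and self-contained, using the transvection structure once more instead of the transfer.

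One small inaccuracy, which does not affect the proof: you claim the vanishing-lattice axioms force $B$ to be nondegenerate, hence $(h_i-\id)V_{\QQ}=\QQ\delta_i$. Nondegeneracy is not among the hypotheses and is not implied by them (Janssen's framework explicitly allows degenerate forms; this is why the paper remarks that $j\colon V\to\Hom(V,\ZZ)$ need be neither injective nor surjective). Fortunately you only need the inclusion $(h_i-\id)V_{\QQ}\subseteq\QQ\delta_i$, which holds for any transvection, so the argument goes through unchanged after deleting that claim.
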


\begin{proof}
Let $\phi \in Z^1 \bigl( G, V_{\QQ} \bigr)$ represent an element of the kernel; we
have to show that it belongs to $B^1 \bigl( G, V_{\QQ} \bigr)$. 
To simplify the notation, we shall write $\Gamma = \Gamma_{\Delta}$. We begin the
proof by noting that $\phi$ is identically zero on the normal subgroup $N = \ker(G
\to \Gamma)$. Indeed, given any $g \in G$, we can find some $v \in V_{\QQ}$ such that
$\phi(g) = gv - v$; thus any $g$ that acts trivially on $V$ automatically satisfies
$\phi(g) = 0$. Consequently, $\phi$ descends to an element in $H^1 \bigl( \Gamma,
V_{\QQ} \bigr)$. Since $H^1 \bigl( \Gamma, V_{\QQ} \bigr)$ is easily seen to inject
into $H^1 \bigl( G, V_{\QQ} \bigr)$, we may assume from now on that we are dealing
with an element $\phi \in Z^1 \bigl( \Gamma, V_{\QQ} \bigr)$.

Let $r = \dim V_{\QQ}$. Using Lemma~\ref{lem:finite-index}, we can find $r$ linearly
independent elements $\delta_1, \dotsc, \delta_r \in V$, such that $\Gamma' =
\Gamma_{\{ \delta_1, \dotsc, \delta_r \}}$ has finite index in $\Gamma$. Let $T_i =
T_{\delta_i} \in \Gamma'$. As in the odd case, we can adjust $\phi$ by an element of
$B^1 \bigl( \Gamma, V_{\QQ} \bigr)$ to make sure that $\phi(T_r \dotsm T_1) = 0$. By
assumption, we can also find vectors $v_i \in V_{\QQ}$ such that
\[
	\phi(T_i) = T_i v_i - v_i = - \pair{v_i}{\delta_i} \delta_i = a_i v_i,
\]
for $a_i = - \pair{v_i}{\delta_i}$. An application of Lemma~\ref{lem:expand} shows that
\[
	0 = \phi(T_r \dotsm T_1) = \sum_{k=1}^r b_k \delta_k,
\]
for coefficients $b_k \in \QQ$ satisfying the relations in \eqref{eq:relations}.
Since the $\delta_i$ are linearly independent, we have $b_k = 0$ for all
$k$, and thus $a_i = 0$ for all $i$. After the adjustment, the cocycle $\phi$ thus 
satisfies $\phi(T_i) = 0$ for all $i$. Since $\Gamma'$ is generated by the
transvections $T_i$, we conclude that $\phi(\Gamma') = 0$.

It is now easy to show that $\phi$ is identically zero. Let $m$ be the index of $\Gamma'$
in $\Gamma$. Take an arbitrary element $\delta \in \Delta$. As usual, there is a
vector $w \in V_{\QQ}$ such that
\[
	\phi(T_{\delta}) = T_{\delta} w - w = - \pair{w}{\delta} \delta.
\]
From this, one easily deduces that
\[
	\phi \bigl( T_{\delta}^m \bigr) = -m \pair{w}{\delta} \delta = 
		 m \phi \bigl( T_{\delta} \bigr).
\]
On the other hand, $T_{\delta}^m$ belongs to $\Gamma'$, and so $\phi \bigl( T_{\delta}^m
\bigr) = 0$. Since the $T_{\delta}$ together generate $\Gamma$, and $\phi$ is a
cocycle, we then have $m \phi = 0$, and hence $\phi = 0$. This proves the assertion.
\end{proof}

\subsection*{Conclusion of the argument}

To conclude that \eqref{eq:step3} is injective, we now apply our general result to
the vanishing cohomology $V_{\QQ} = H^{d-1}(S_0, \QQ) \van$. All the assumptions are
satisfied by Section~\ref{sec:Lefschetz}, if we let $\pair{u}{v} = \eps_d (u,v)$ be a
multiple of the intersection pairing on $S_0$.

In more detail, we set $V = H^{d-1}(S_0, \ZZ) \van$ modulo torsion; it is generated
by the vanishing cycles $e_1, \dotsc, e_n$ of any Lefschetz pencil. Let $g_1, \dotsc,
g_n$ be the corresponding elements of the fundamental group $G$. The collection of
all vanishing cycles $\Delta = G \cdot \{ e_1, \dotsc, e_n \}$ is then a vanishing
lattice in $V$, because $e_1, \dotsc, e_n$ all lie in one $G$-orbit, and because of
Lemma~\ref{lem:number-one}.  Proposition~\ref{prop:group-even} now gives us the
injectivity of the restriction map in \eqref{eq:inj-even}, which finishes the proof
that \eqref{eq:map-main} is injective when the dimension of $X$ is even.

\section{Completing the proof of the main theorem}
\label{sec:conclusion}

In this section, we finish the proof of Theorem~\ref{thm:tube-main} about the
surjectivity of the tube mapping. As we have seen in Section~\ref{sec:proof}, it is
sufficient to show that the dual mapping \eqref{eq:tube-dual} is injective. We have
already proved that the map in \eqref{eq:map-main} is injective for all values of
$d$; it remains to convince ourselves that the two maps are the same. This is almost
obvious; but for the sake of completeness, a proof is included here.

\subsection*{Smooth families over the circle}

We first consider the tube mapping in the case of a family of smooth manifolds over
$\SSn{1}$. So let $f \colon Y \to \SSn{1}$ be a proper and submersive map of smooth
manifolds. Let $m$ be the real dimension of $Y$, and let $Y_0$ be the fiber of $f$
over the point $1 \in \SSn{1}$. The cohomology of $Y$ and $Y_0$ can be represented
by smooth differential forms, and this will be done throughout.

The fundamental group of $\SSn{1}$ is isomorphic to $\ZZ$; it acts by monodromy on 
the homology and cohomology of $Y_0$.  This action is easily described.  Namely, the
smooth map $e \colon \RR \to \SSn{1}$, $t \mapsto \exp(2 \pi i t)$, makes $\RR$ into
the universal covering space of $\SSn{1}$. Since $Y$ is a smooth fiber bundle over
$\SSn{1}$, its pullback to $\RR$ is diffeomorphic to $\RR \times Y_0$.  The following
diagram shows the relevant maps.
\begin{diagram}[width=2.5em,midshaft]
	Y_0 &\rInto^{i_t}& Y_0 \times \RR &\rTo^{\Phi}& Y \\
	\dTo && \dTo && \dTo^f \\
	\{t\} &\rInto& \RR &\rTo^e& \SSn{1}	
\end{diagram}
We also write $F = \Phi \circ i_1$, which is a diffeomorphism from $Y_0$ to itself.
Note that $\Phi \circ i_0$ is simply the inclusion of $Y_0$ into $Y$.

Given a homology class $\alpha \in H_i(Y_0, \QQ)$, a flat translate of $\alpha$ to
the fiber over $e(t)$ is given by $(\Phi \circ i_t)(\alpha)$. In particular,
the monodromy action $T_i \colon H_i(Y_0, \QQ) \to H_i(Y_0, \QQ)$ by the standard
generator is $T_i(\alpha) = (\Phi \circ i_1)_{\ast} \alpha = F_{\ast} \alpha$. 
Similarly, we have an action on cohomology $T^i \colon H^i(Y_0, \QQ) \to H^i(Y_0, \QQ)$.

Tube classes on $Y$ are defined in the following way.  Suppose that $\alpha \in \ker
T_{k-1}$ is a monodromy-invariant homology class on $Y_0$. This means that there is a
$k$-chain $A$ on $Y_0$, such that $\partial A = F(\alpha) - \alpha$.  Translating
$\alpha$ flatly along $\SSn{1}$ and taking the trace in $Y$ gives the $k$-chain
$\Gamma = \Phi \bigl( \alpha \times \unitint \bigr)$. Then $\Gamma - A$ is closed,
and its class $\tau(\alpha) \in H_k(Y, \QQ)$ is the tube class determined by
$\alpha$. Of course, $\tau(\alpha)$ is only defined up to elements of
$H_d(Y_0, \QQ)$, because of the ambiguity in choosing $A$.

We now have to connect this topological construction with the one coming from the
Leray spectral sequence for the map $f$. The latter degenerates at $E_2$, and gives us for
each $k \geq 0$ a short exact sequence
\begin{equation} \label{eq:seq-S1}
\begin{diagram}
	0 &\rTo& H^1 \bigl( \SSn{1}, R^{k-1} \fl \QQ \bigr) &\rTo&
		H^k(Y, \QQ) &\rTo& H^0 \bigl( \SSn{1}, R^k \fl \QQ \bigr) &\rTo& 0.
\end{diagram}
\end{equation}
The first and third group can be computed explicitly; we have
\[
	H^1 \bigl( \SSn{1}, R^{k-1} \fl \QQ \bigr) \simeq \coker T^{k-1} \qquad
		\text{and} \qquad
	H^0 \bigl( \SSn{1}, R^k \fl \QQ \bigr) \simeq \ker T^k.
\]
Now suppose we are given a cohomology class in $H^k(Y, \QQ)$ whose restriction to
the fibers of $f$ is trivial. By virtue of \eqref{eq:seq-S1}, it defines a class in
$H^1 \bigl( \SSn{1}, R^{k-1} \fl \QQ \bigr)$, and hence in $\coker T^{k-1}$. The
following lemma gives a formula for this class.

\begin{lemma} \label{lem:tube-S1}
Let $\beta$ be a smooth and closed $k$-form on $Y$, representing an element of
$\ker \bigl( H^k(Y, \QQ) \to H^k(Y_0, \QQ) \bigr)$. Choose any
$(k-1)$-form $\gamma$ on $Y_0 \times \RR$ with $\Phiu \beta = d \gamma$, and let
$\gamma_t = \itu \gamma$. 
\begin{enumerate}[label=(\roman{*}), ref=(\roman{*})]
\item The element of $\coker T^{k-1}$ determined by $\beta$ is 
$\lambda(\beta) = (F^{-1})^{\ast} \gamma_1 - \gamma_0$. \label{enum:lem-i}
\item For every monodromy-invariant class $\alpha \in H^{k-1}(Y_0, \QQ)$, we have
\[	
	\int_{\tau(\alpha)} \beta = \int_{\alpha} \lambda(\beta).
\]
where $\tau(\alpha)$ is the tube class on $Y$ coming from $\alpha$.
\label{enum:lem-ii}
\end{enumerate}
\end{lemma}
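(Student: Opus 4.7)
The strategy is to prove part (ii) by a direct Stokes computation on the cylinder $Y_0 \times \unitint$, and to prove part (i) by recognizing $(F^{-1})^{\ast}\gamma_1 - \gamma_0$ as the standard differential-form representative of the Wang/Leray edge-map class. The existence of $\gamma$ with $d\gamma = \Phiu\beta$ is immediate: $\Phiu\beta$ is closed on $Y_0 \times \RR$, and since $Y_0 \times \RR$ deformation retracts to $Y_0$ while $\iu\beta$ is exact on $Y_0$ by hypothesis, $\Phiu\beta$ is exact.

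For part (i), I would first check that $\lambda(\beta)$ is closed: since $d\gamma_0 = \iu\beta$ and $d\gamma_1 = F^{\ast}\iu\beta$, both $\gamma_0$ and $(F^{-1})^{\ast}\gamma_1$ are primitives of $\iu\beta$ on $Y_0$, so their difference is closed. Well-definedness in $\coker(T^{k-1} - \id)$ follows from short computations: adding a closed form on $Y_0 \times \RR$ to $\gamma$ alters the output by a term in the image of $(F^{-1})^{\ast} - \id$, which coincides with the image of $F^{\ast} - \id$ since $F^{\ast}$ is invertible; changing $\beta$ by an exact form on $Y$ produces a cancelling contribution at $t=0$ and $t=1$. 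To identify this class with the Leray edge map, I would observe that $\lambda(\beta) = i_0^{\ast}(\sigma^{\ast}\gamma - \gamma)$, where $\sigma(y,t) = (F^{-1}(y), t+1)$ generates the deck group of $\Phi\colon Y_0 \times \RR \to Y$; this is precisely the Čech $1$-cocycle whose class in $H^1(\ZZ, H^{k-1}(Y_0)) \simeq H^1(\SSn{1}, R^{k-1}\fl\QQ)$ corresponds to $\beta$ under the Wang sequence. A direct comparison with a good-cover description of the Leray spectral sequence (e.g., $\SSn{1} = U \cup V$ with $U, V$ arcs) confirms this identification.

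For part (ii), write $\tau(\alpha) = \Gamma - A$ with $\Gamma = \Phi(\alpha \times \unitint)$ and $\partial A = F_{\ast}\alpha - \alpha$. By Stokes,
\[
  \int_{\Gamma} \beta = \int_{\alpha \times \unitint} \Phiu\beta = \int_{\alpha \times \unitint} d\gamma = \int_{\alpha} \gamma_1 - \int_{\alpha} \gamma_0,
\]
using $\partial(\alpha \times \unitint) = \alpha \times \{1\} - \alpha \times \{0\}$. Similarly,
\[
  \int_A \beta = \int_A d\gamma_0 = \int_{F_{\ast}\alpha - \alpha} \gamma_0 = \int_{\alpha} F^{\ast}\gamma_0 - \int_{\alpha} \gamma_0,
\]
so that $\int_{\tau(\alpha)} \beta = \int_{\alpha}(\gamma_1 - F^{\ast}\gamma_0)$. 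Finally, to reconcile with $\int_{\alpha} \lambda(\beta) = \int_{\alpha}((F^{-1})^{\ast}\gamma_1 - \gamma_0)$, use the change of variables $\int_{\alpha}(F^{-1})^{\ast}\gamma_1 = \int_{(F^{-1})_{\ast}\alpha} \gamma_1$; applying $(F^{-1})_{\ast}$ to $\partial A = F_{\ast}\alpha - \alpha$ yields $(F^{-1})_{\ast}\alpha - \alpha = -\partial (F^{-1})_{\ast} A$, and Stokes together with $(F^{-1})^{\ast} d\gamma_1 = d\gamma_0$ gives $\int_{\alpha}(F^{-1})^{\ast}\gamma_1 - \int_{\alpha}\gamma_1 = -\int_A d\gamma_0$. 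Combining these identities produces the required equality. The main obstacle is the spectral-sequence identification in part (i), since making the Leray edge map explicit requires a concrete model; the rest of the lemma is routine Stokes and bookkeeping.
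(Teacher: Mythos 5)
Your argument is correct, and part (ii) is essentially the paper's computation: both proofs reduce $\int_{\tau(\alpha)}\beta$ to integrals of $\gamma_0$ and $\gamma_1$ over $\alpha$ by applying Stokes on $\alpha\times\unitint$ and on a chain $A$ with $\partial A=F_{\ast}\alpha-\alpha$; your reconciliation via $(F^{-1})_{\ast}$ and the identity $(F^{-1})^{\ast}d\gamma_1=d\gamma_0$ is just a reshuffling of the same identity. Part (i), however, is where you genuinely diverge. The paper never produces an intrinsic model of the Leray edge map: it pairs $\beta$ against an arbitrary closed $(m-k)$-form $\omega$ on $Y$ and shows, by the same cylinder computation, that $\int_Y\beta\wedge\omega=\int_{Y_0}\bigl((F^{-1})^{\ast}\gamma_1-\gamma_0\bigr)\wedge i^{\ast}\omega$; since over $\SSn{1}$ the Leray spectral sequence degenerates, the restrictions $i^{\ast}\omega$ exhaust the monodromy invariants in $H^{m-k}(Y_0,\QQ)$, and the (monodromy-invariant, perfect) intersection pairing on $Y_0$ induces a nondegenerate pairing between $\coker(T^{k-1}-\id)$ and these invariants, so the displayed identity pins down the class of $\beta$ as $\lambda(\beta)$. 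You instead identify $\lambda(\beta)$ at the cocycle level, as $i_0^{\ast}(\sigma^{\ast}\gamma-\gamma)$ for the deck transformation $\sigma$ of the infinite cyclic cover (note that $\sigma(y,t)=(F^{-1}(y),t+1)$ only for a suitably chosen trivialization $\Phi$, though only the value at $t=0$ matters), and then appeal to the agreement of the Wang/group-cohomology description with the Leray edge map, to be checked on a two-arc cover of $\SSn{1}$. That identification is standard and your mechanism is the right one, but it is precisely the crux of (i) and is left as an assertion; if you take this route you should carry out (or cite) the edge-map comparison — in fairness, the paper's ``follows by duality'' is comparably terse, tacitly using the surjectivity onto invariants and the compatibility of $\int_Y\beta\wedge\omega$ with the pairing of Leray graded pieces. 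As for what each route buys: the paper's duality formulation needs no explicit model of the spectral sequence and is exactly the statement used afterwards to match \eqref{eq:map-main} with the dual tube mapping, whereas your version gives a sharper, representative-level description of the class (fitting naturally with the group-cohomology picture in \eqref{eq:step2}) at the cost of that extra verification.
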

\begin{proof}
Note that $\lambda(\beta)$ is a closed $(k-1)$-form on $S_0$. 
Let $m$ be the dimension of the smooth manifold $Y$, and let $i \colon Y_0 \to Y$ be
the inclusion map. For every closed $(m-k)$-form $\omega$ on $Y$, a simple
calculation using Stokes' Theorem shows that
\begin{align*}
	\int_Y \beta \wedge \omega 
	&= \int_{Y_0 \times \unitint} \Phiu \beta \wedge \Phiu \omega
	= \int_{Y_0 \times \unitint} d \gamma \wedge \Phiu \omega \\
	&= \int_{Y_0 \times \unitint} d \bigl( \gamma \wedge \Phiu \omega \bigr)
	= \int_{Y_0} i_1^{\ast} \bigl( \gamma \wedge \Phiu \omega \bigr) -
		\int_{Y_0} i_0^{\ast} \bigl( \gamma \wedge \Phiu \omega \bigr) \\
	&= \int_{Y_0} \gamma_1 \wedge F^{\ast} \bigl( \iu \omega \bigr)
			- \int_{Y_0} \gamma_0 \wedge \iu \omega
	= \int_{Y_0} \bigl( (F^{-1})^{\ast} \gamma_1 - \gamma_0 \bigr) \wedge
		\iu \omega.
\end{align*}
The assertion in \ref{enum:lem-i} now follows by duality.

To prove the second half, we recall that the tube class is given by $\Gamma - A$,
where $\partial A = F(\alpha) - \alpha$ on $Y_0$, and $\Gamma = \Phi \bigl( \alpha \times
\unitint \bigr)$. Again using Stokes' Theorem, we compute that
\begin{align*}
	\int_{\alpha} (F^{-1})^{\ast} \gamma_1 - \gamma_ 0 
	&= \int_{\alpha} (\gamma_1 - \gamma_0) + 
		\int_{F^{-1}(\alpha) - \alpha} \gamma_1
	= \int_{\partial(\alpha \times \unitint)} \gamma -
		\int_{F^{-1}(\partial A)} i_1^{\ast} \gamma  \\
	&= \int_{\alpha \times \unitint} d \gamma -
		\int_{F^{-1}(A)} i_1^{\ast}(d \gamma) 
	= \int_{\alpha \times \unitint} \Phiu \beta - 
		\int_{F^{-1}(A)} i_1^{\ast} \bigl( \Phiu \beta \bigr) \\
	&= \int_{\Gamma} \beta - \int_{F^{-1}(A)} F^{\ast} \beta 
	= \int_{\Gamma - A} \beta = \int_{\tau(\alpha)} \beta.
\end{align*}
This is the identity asserted in \ref{enum:lem-ii}.
\end{proof}

\subsection*{Identity of the two maps}

We are now ready to show that the map in \eqref{eq:map-main} is equal to the dual of
the tube mapping in \eqref{eq:tube-dual}. Let $g \in G$ be an element of the
fundamental group of $\Psm$, and let $\alpha \in H_{d-1}(S_0, \QQ)$ be any class
invariant under the action by $g$. We write $\tau_g(\alpha) \in H_d(X, \QQ)$ for the
tube class determined by $\alpha$; as we saw, it is well-defined up to the addition
of elements in $H_d(S_0, \QQ)$.

Take any closed $d$-form $\omega$ on $X$, whose class lies in $H^d(X, \QQ) \prim$.
Under the mapping
\[
	H^d(X, \QQ) \prim \to 
		\prod_{g \in G} H^{d-1}(S_0, \QQ) \van / (g - \id) H^{d-1}(S_0, \QQ) \van.
\]
in \eqref{eq:map-main}, $\omega$ is sent to an element of the product with coordinates
$\bigl( \lambda_g(\omega) + \im(g - \id) \bigr)$. 
Of course, $\lambda_g(\omega)$ itself is not uniquely determined by $\omega$; we
choose this notation only because the ambiguity turns out not to matter.

To prove that the map in \eqref{eq:map-main} really is the dual of the tube mapping,
it suffices to establish the identity
\begin{equation} \label{eq:identity-g}
	\int_{\tau_g(\alpha)} \omega = \int_{\alpha} \lambda_g(\omega)
\end{equation}
To do this, represent $g$ by an immersion $\SSn{1} \to
\Psm$, and let $f \colon Y \to \SSn{1}$ be the pullback of the family $\pism \colon
\famXsm \to \Psm$. Then $Y$ is a smooth manifold of dimension $m = 2d-1$. We have the
following diagram of maps:
\begin{diagram}[width=2.5em,midshaft]
	Y &\rTo^h& \famXsm &\rTo^q& X \\
	\dTo^f && \dTo^{\pism} \\
	\SSn{1} &\rTo^g& \Psm
\end{diagram}	
The fiber over the base point of $\SSn{1}$ is $Y_0 = S_0$, in the
notation used above. Then $\alpha \in H_{d-1}(S_0, \QQ)$ determines a tube class
$\tau(\alpha)$ on $Y$, and by the definition of the tube mapping, we have
\[
	\tau_g(\alpha) \equiv (qh)_{\ast} \tau(\alpha) \mod H_d(S_0, \QQ).
\]
Since $\omega$ is primitive, its restriction to $S_0$ is trivial.
Lemma~\ref{lem:tube-S1}, applied to the class $(qh)^{\ast} \omega$, shows that
\[
	\int_{\tau_g(\alpha)} \omega = \int_{\tau(\alpha)} (qh)^{\ast} \omega
	= \int_{\alpha} \lambda \bigl( (qh)^{\ast} \omega \bigr).
\]
The class $\lambda \bigl( (qh)^{\ast} \omega \bigr)$ is determined by the Leray
spectral sequence for the map $f$. On the other hand, the class $\lambda_g(\omega)$ is
determined in exactly the same way by the Leray spectral sequence for $\pism$.
But both spectral sequences are compatible with each other, starting from the
$E_2$-page, and so it has to be the case that
\[
	\lambda \bigl( (qh)^{\ast} \omega \bigr)	\equiv (qh)^{\ast} \lambda_g(\omega) 
		\mod (g - \id) H^{d-1}(S_0, \QQ).
\]
Now $\alpha$ is $g$-invariant, and its integral against any element of
$(g - \id) H^{d-1}(S_0, \QQ)$ is therefore zero. It follows that
\[
	\int_{\alpha} \lambda \bigl( (qh)^{\ast} \omega \bigr) = 
		\int_{\alpha} (qh)^{\ast} \lambda_g(\omega) = 
		\int_{\alpha} \lambda_g(\omega).
\]
After combining this with the other equality, we obtain \eqref{eq:identity-g}. 

\section{Additional thoughts and questions}

This section is a collection of several additional thoughts and questions.

\subsection*{A vanishing theorem}

Let $V = H^{d-1}(S_0, \ZZ) \van$ modulo torsion, and let $\Gamma$ be the image of the
monodromy representation $G \to \Aut(V)$. Also let $N = \ker(G \to \Gamma)$ be its
kernel. From the Hochschild-Serre spectral sequence \cite{Weibel}*{p.195} for the
normal subgroup $N$, we get an exact sequence
\begin{equation} \label{eq:Hochschild}
\begin{diagram}
	0 &\rTo& H^1(\Gamma, V) &\rTo& H^1(G, V) &\rTo&
		H^1(N, V)^{\Gamma} &\rTo^{\partial}& H^2(\Gamma, V).
\end{diagram}
\end{equation}
An element of the group $H^1(N, V)$ is simply a homomorphism from $\Nab = N/\lbrack N, N
\rbrack$ to $V$, because the action of $N$ on $V$ is trivial. The group $\Gamma$
naturally acts on $\Nab$ by conjugation, and we have
\[
	H^1(N, V)^{\Gamma} = \menge{\psi \in \Hom \bigl( \Nab, V \bigr)}
		{\text{$\psi(gxg^{-1}) = g \psi(x)$ for all $g \in \Gamma$}}.
\]

When the degree of the embedding of $X$ into projective space is sufficiently large,
and $d \geq 2$, we have $H^d(X, \QQ) \prim \simeq H^1 \bigl( G, V_{\QQ} \bigr)$ by
Proposition~\ref{prop:Nori}. We shall assume both things from now on.

\begin{lemma} \label{lem:phi-delta}
Let $\phi \in Z^1 \bigl( \Gamma, V_{\QQ} \bigr)$ be an arbitrary cocycle. For any $\delta
\in \Delta$, we have $\phi(T_{\delta}) \in \QQ \cdot \delta$.
\end{lemma}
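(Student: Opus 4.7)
The strategy is to reduce $\phi(T_\delta) \in \QQ\cdot\delta$ to the orthogonality condition $\phi(T_\delta) \perp \delta^\perp$, which is equivalent because $B$ is non-degenerate on $V_\QQ$ and $\delta \in \delta^\perp$, giving $(\delta^\perp)^\perp = \QQ\cdot\delta$. I will verify the orthogonality by pairing $\phi(T_\delta)$ against a spanning set of $\delta^\perp$ consisting of $\delta$ together with suitable vanishing cycles drawn from $\Delta$.

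The main algebraic device is the commutation identity for transvections. Whenever $\delta' \in \Delta$ satisfies $\pair{\delta}{\delta'} = 0$, a direct check using $(T_\alpha - \id)v = -\pair{v}{\alpha}\alpha$ shows that $T_\delta$ and $T_{\delta'}$ commute. Applying $\phi$ to both sides of $T_\delta T_{\delta'} = T_{\delta'} T_\delta$ and using the cocycle rule $\phi(gh) = \phi(g) + g\phi(h)$ collapses to
\[
    \pair{\phi(T_\delta)}{\delta'}\,\delta' \;=\; \pair{\phi(T_{\delta'})}{\delta}\,\delta.
\]
If $\delta'$ is linearly independent of $\delta$, both sides must vanish; so $\pair{\phi(T_\delta)}{\delta'} = 0$ for every $\delta' \in (\Delta \cap \delta^\perp) \setminus \QQ\cdot\delta$.

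To finish I must show that these $\delta'$ together with $\delta$ span $\delta^\perp$ over $\QQ$. Here Theorem~\ref{thm:Janssen} is essential: $\Gamma_\Delta$ contains $\Sptsh(V)$, so in particular $T_\alpha^2 \in \Gamma_\Delta$ for every $\alpha \in V$. For $\alpha \in \delta^\perp \cap V$ the operator $T_\alpha$ fixes $\delta$, so $T_\alpha^2$ lies in the stabilizer of $\delta$ in $\Gamma_\Delta$. These squared transvections descend to transvections on the symplectic quotient $\delta^\perp/\QQ\cdot\delta$, indexed by a full-rank sublattice of classes $\bar\alpha$, and therefore generate a Zariski-dense subgroup of $\Sp(\delta^\perp/\QQ\cdot\delta)$. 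Consequently the action of the stabilizer on $\delta^\perp/\QQ\cdot\delta$ is irreducible, and the orbit of any $\delta' \in \Delta \cap \delta^\perp$ with $\delta' \notin \QQ\cdot\delta$ spans it; together with $\delta$ this yields the spanning set of $\delta^\perp$ needed above.

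The main obstacle is exactly this spanning step: the abstract definition of a vanishing lattice affords no direct control over $\Delta \cap \delta^\perp$, and Janssen's congruence-subgroup theorem is precisely what supplies the necessary symmetries fixing $\delta$. The low-rank case $\dim V_\QQ = 2$ (where $\delta^\perp = \QQ\cdot\delta$ leaves the commutation step vacuous) must be handled separately, via the braid relation $T_\delta T_{\delta'} T_\delta = T_{\delta'} T_\delta T_{\delta'}$ for a pair $\delta, \delta' \in \Delta$ with $\pair{\delta}{\delta'} = 1$ supplied by Lemma~\ref{lem:number-one}; a direct cocycle calculation shows that this relation forces the $\delta'$-coordinate of $\phi(T_\delta)$ to vanish, yielding the conclusion in that case as well.
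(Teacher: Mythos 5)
Your route is genuinely different from the paper's (which reduces to a single Lefschetz vanishing cycle $e_i$ and deduces $\psi(g_i)\in\QQ e_i$ geometrically, from the standing high-degree assumption via Proposition~\ref{prop:Nori} together with Lemma~\ref{lem:Si-trivial}), but as written it has a gap in the main case $\dim V_{\QQ}\geq 4$. Your plan is to show $\phi(T_\delta)\in(\delta^\perp)^\perp=\QQ\delta$ by pairing against a spanning set of $\delta^\perp$ consisting of $\delta$ \emph{together with} elements of $\Delta\cap\delta^\perp$. The commutation identity only yields $\pair{\phi(T_\delta)}{\delta'}=0$ for $\delta'\in\Delta\cap\delta^\perp$ linearly independent of $\delta$; nothing in the argument establishes the remaining pairing $\pair{\phi(T_\delta)}{\delta}=0$, which is not automatic for a cocycle (it is a consequence of the conclusion, not of the cocycle condition alone). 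Hence what you actually obtain is $\phi(T_\delta)\in W^\perp$, where $W$ is the span of your orbit of $\delta'$'s; since you only prove $W+\QQ\delta=\delta^\perp$, this is a two-dimensional space containing $\QQ\delta$, not $\QQ\delta$ itself. The gap is repairable with your own rank-two device: for any $\delta'\in\Delta$ with $\pair{\delta}{\delta'}=1$ (such a $\delta'$ exists for \emph{every} $\delta$, by transporting the pair from axiom (3) with the transitive $\Gamma$-action), the braid relation $T_\delta T_{\delta'}T_\delta=T_{\delta'}T_\delta T_{\delta'}$ holds in $\Sp(V)$ in any rank, and the same cocycle computation kills the $\delta'$-component of $\phi(T_\delta)$ in the splitting $V=\langle\delta,\delta'\rangle\oplus\langle\delta,\delta'\rangle^\perp$, which is exactly $\pair{\phi(T_\delta)}{\delta}=0$. (Alternatively, one can check that $\delta$ already lies in $W$.) A second, smaller omission: you never verify that $\Delta\cap\delta^\perp$ contains an element outside $\QQ\delta$ when $\dim V_{\QQ}\geq 4$; this is true but needs an argument, e.g.\ moving some $\beta\in\Delta$ not in $\langle\delta,\delta'\rangle$ by powers of $T_\delta$ and $T_{\delta'}$ to make its pairing with $\delta$ vanish.

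Two further remarks on scope. First, your appeal to Theorem~\ref{thm:Janssen} is legitimate ($T_\alpha^2\in\Sptsh_2(V)$ for every $\alpha\in V$, so the squared transvections in directions $\alpha\in\delta^\perp$ do lie in the stabilizer of $\delta$ in $\Gamma$), and the Zariski-density/irreducibility step on $\delta^\perp/\QQ\delta$ is sound. Second, note that your argument is intrinsically tied to the skew-symmetric case: it uses $\delta\in\delta^\perp$, Lemma~\ref{lem:number-one}, and Janssen's theory, so even when patched it proves the lemma only for even $d$, whereas the paper's geometric proof is uniform in $d$ (at the price of using the high-degree hypothesis, which your argument avoids).
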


\begin{proof}
Since all elements of $\Delta$ are in the same $G$-orbit, it suffices to prove the
statement for a single vanishing cycle. It is most convenient to take $\delta$ equal
to one of the vanishing cycles $e_i$ in a Lefschetz pencil, corresponding to a
hyperplane section $S_i$ with a single ordinary double point. By
Lemma~\ref{lem:Si-trivial}, every primitive cohomology class has trivial restriction
to $S_i$, and this implies that all $\psi \in Z^1 \bigl( G, V_{\QQ} \bigr)$ satisfy
$\psi(g_i) \in \QQ e_i$. But $H^1 \bigl( \Gamma, V_{\QQ} \bigr)$ is a subgroup of
$H^1 \bigl( G, V_{\QQ} \bigr)$, and the result follows.
\end{proof}

\begin{proposition}
We have $H^1 \bigl( \Gamma, V_{\QQ} \bigr) = 0$.
\end{proposition}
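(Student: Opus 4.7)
Let $\phi \in Z^1(\Gamma, V_\QQ)$. The plan is to produce an element $v \in V_\QQ$ with $\phi(g) = gv - v$ for every $g \in \Gamma$. By Lemma~\ref{lem:phi-delta}, for each vanishing cycle $\delta \in \Delta$ there is a scalar $a_\delta \in \QQ$ such that $\phi(T_\delta) = a_\delta \delta$. Because the intersection pairing $B$ is non-degenerate on $V_\QQ$ (by the Hard Lefschetz theorem) and $\Delta$ spans $V_\QQ$, I can pick a basis $\delta_1, \dotsc, \delta_r$ of $V_\QQ$ from within $\Delta$ and invert the resulting Gram matrix to obtain a unique $v \in V_\QQ$ satisfying $B(v,\delta_i) = -a_{\delta_i}$ for every $i$. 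From the transvection formula $T_\delta v - v = -B(v,\delta)\,\delta$, this is precisely the assertion $\phi(T_{\delta_i}) = T_{\delta_i} v - v$ on each basis transvection.

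After subtracting the coboundary $g \mapsto gv - v$ from $\phi$, I may assume $\phi(T_{\delta_i}) = 0$ for $i = 1, \dotsc, r$. The cocycle identity then forces $\phi$ to vanish on the entire subgroup $\Gamma' = \langle T_{\delta_1}, \dotsc, T_{\delta_r}\rangle$. The remaining task is to extend this vanishing to all of $\Gamma$. In the even case, Lemma~\ref{lem:finite-index} guarantees that $\Gamma'$ has finite index in $\Gamma$, so for any $\delta \in \Delta$ some fixed power $T_\delta^m$ lies in $\Gamma'$ and therefore satisfies $\phi(T_\delta^m) = 0$. Because $B(\delta,\delta) = 0$ when $d$ is even, the transvection $T_\delta$ is unipotent with $T_\delta \delta = \delta$, and a short induction on $m$ using the cocycle identity gives $\phi(T_\delta^m) = m a_\delta \cdot \delta$. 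It follows that $a_\delta = 0$ for every $\delta \in \Delta$, and since the transvections $T_\delta$ generate $\Gamma$, the adjusted cocycle $\phi$ vanishes on all of $\Gamma$, so the original $\phi$ equals the coboundary $g \mapsto gv - v$.

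I expect the main obstacle to be the odd case, where $T_\delta^2 = \id$ and so raising a single transvection to a power yields no new information. A plausible route is to combine Proposition~\ref{prop:group-odd}, applied to $\Gamma$ in place of $G$ (the hypotheses on generators and distinguished elements are met by $T_{e_i}$ and $e_i$), with a direct verification that every cocycle in $Z^1(\Gamma, V_\QQ)$ in fact lies in the kernel of the restriction map $H^1(\Gamma, V_\QQ) \to \prod_{g \in \Gamma} V_\QQ/(g - \id)V_\QQ$. On transvections this is immediate from Lemma~\ref{lem:phi-delta}, and one can attempt to propagate the property $\phi(g) \in (g - \id) V_\QQ$ from transvections to arbitrary products by using Lemma~\ref{lem:expand} together with the linear independence of the basis vanishing cycles $\delta_1, \dotsc, \delta_r$, in the spirit of the two-step procedure from the proof of Proposition~\ref{prop:group-odd}. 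Making this propagation go through cleanly for a general element of $\Gamma$, rather than only for its generators, is where the real technical work of the odd case lies.
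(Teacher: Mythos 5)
Your even-case argument is correct and essentially identical to the paper's own proof: Lemma~\ref{lem:phi-delta} gives $\phi(T_{\delta_i})=a_i\delta_i$ on the basis transvections, nondegeneracy of $B$ on $V_{\QQ}$ lets you subtract a coboundary killing these values, and then the finite index of $\Gamma'$ together with $T_\delta\delta=\delta$ (valid since $B(\delta,\delta)=0$ for even $d$) forces $\phi(T_\delta)=0$ for every $\delta\in\Delta$, hence $\phi=0$ on $\Gamma$. Two small repairs: you should take $\delta_1,\dotsc,\delta_r$ to be the specific family produced by Lemma~\ref{lem:finite-index} (it is automatically a basis, since $r=\dim V_{\QQ}$); the lemma asserts the existence of one such family, not that an arbitrary basis of $V_{\QQ}$ chosen inside $\Delta$ generates a finite-index subgroup. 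Also, since $\Gamma'$ need not be normal in $\Gamma$, the exponent $m$ with $T_\delta^{m}\in\Gamma'$ may depend on $\delta$; this costs nothing, because any positive power yields $m\,a_\delta=0$, and the paper's proof of Proposition~\ref{prop:group-even} makes the same harmless gloss.

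Concerning odd $d$: the paper does not prove that case either --- it only remarks that a similar argument should follow from Ebeling's results on monodromy groups of symmetric vanishing lattices --- so your admitted incompleteness there matches the paper's. Your proposed route (apply Proposition~\ref{prop:group-odd} to $\Gamma$ and show directly that every class in $H^1(\Gamma,V_{\QQ})$ restricts to zero on each cyclic subgroup) is genuinely different from the paper's suggestion, and, as you yourself note, the propagation from transvections to arbitrary elements of $\Gamma$ is exactly the missing step; the power trick is unavailable there because $T_\delta^2=\id$, so some substitute for Lemma~\ref{lem:finite-index} in the symmetric setting (this is what the reference to Ebeling is meant to supply) would be needed to close it.
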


\begin{proof}
We will prove this for even values of $d$, using the results from
Section~\ref{sec:group-even}. Referring to the work of W.~Ebeling \cite{Ebeling} on
monodromy groups of symmetric vanishing lattices, a similar argument should work in
the odd case.

So let $\phi \in Z^1 \bigl( \Gamma, V_{\QQ} \bigr)$ be a cocycle. As in the proof of
Proposition~\ref{prop:group-even}, we can find $r$ linearly independent elements
$\delta_1, \dotsc, \delta_r \in \Delta$, where $r = \dim V_{\QQ}$, such that $\Gamma'
= \Gamma_{\{ \delta_1, \dotsc, \delta_r \}}$ has finite index in $\Gamma$. Let $T_i =
T_{\delta_i}$. By Lemma~\ref{lem:phi-delta}, we have $\phi(T_i) = a_i \delta_i$ for
certain $a_i \in \QQ$. Now the pairing $B$ is nondegenerate, because it is a multiple
of the intersection pairing, and so we can find a vector $v \in V_{\QQ}$ with the
property that $\pair{v}{\delta_i} = a_i$ for all $i$. Adjusting $\phi$ by the
coboundary $(g \mapsto gv - v)$, we achieve that $\phi(T_i) = 0$ for all $i$, hence
that $\phi(\Gamma') = 0$. This implies that $\phi(\Gamma) = 0$. Since $\Gamma'$ has
finite index in $\Gamma$, we can then argue as before to show that $\phi = 0$.
\end{proof}

The exact sequence in \eqref{eq:Hochschild} now lets us conclude that
\[
	H^d(X, \QQ) \prim \subseteq
		\menge{\psi \in \Hom \bigl( \Nab, V_{\QQ} \bigr)}
			{\text{$\psi(gxg^{-1}) = g \psi(x)$ for all $g \in \Gamma$}}.
\]
This has two implications. On the one hand, it shows that the kernel of the monodromy
representation is a very large group. In fact, since $V_{\QQ}$ is an irreducible
$\Gamma$-module, we have $\im \psi = V_{\QQ}$ whenever $\psi \neq 0$. In the presence
of primitive cohomology classes on $X$, the abelianization of $N$ is therefore of
rank at least as big as that of the vanishing cohomology $H^{d-1}(S_0, \QQ) \van$.

On the other hand, only a small part of the right-hand side in \eqref{eq:map-main} is
required to detect the primitive cohomology of $X$.

\begin{lemma} \label{lem:N}
Let $\psi \in H^1(N, V)^{\Gamma} \tensor \QQ$, and $v \in V_{\QQ}$ be any nonzero
vector. If the element of $H^1(N, \QQ) = \Hom \bigl( \Nab, \QQ
\bigr)$ given by the rule $x \mapsto \Pair{\psi(x)}{v}$ is zero, then $\psi$
itself is zero.
\end{lemma}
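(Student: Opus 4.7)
The plan is to exploit three ingredients simultaneously: the $\Gamma$-equivariance of $\psi$, the $\Gamma$-invariance of the bilinear form $B$, and the fact (used already in the preceding proposition) that $V_{\QQ}$ is an irreducible $\Gamma$-module. The nondegeneracy of $B$ on $V_{\QQ}$ then does the rest.

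Concretely, I would first rewrite the hypothesis as the statement that $v$ lies in the $B$-orthogonal complement $(\im \psi)^{\perp} \subseteq V_{\QQ}$. The key observation is that this orthogonal complement is a $\Gamma$-submodule. Indeed, for any $g \in \Gamma$ and any $x \in \Nab$, the computation
\[
	\Pair{\psi(x)}{g v} = \Pair{g^{-1} \psi(x)}{v} = \Pair{\psi(g^{-1} x g)}{v} = 0
\]
uses in order that $\Gamma$ preserves $B$, that $\psi$ is $\Gamma$-equivariant (so $\psi(g^{-1} x g) = g^{-1} \psi(x)$), and that $g^{-1} x g \in \Nab$ (since $N$ is normal in $G$). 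Hence $g v \in (\im \psi)^{\perp}$ for every $g \in \Gamma$.

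Next, the $\QQ$-linear span of the orbit $\Gamma \cdot v$ is a nonzero $\Gamma$-submodule of $V_{\QQ}$, nonzero because $v \neq 0$. Since $V_{\QQ}$ is irreducible as a $\Gamma$-module, this submodule must be all of $V_{\QQ}$. Consequently $V_{\QQ} \subseteq (\im \psi)^{\perp}$, i.e.\@ $B(\psi(x), w) = 0$ for every $x \in \Nab$ and every $w \in V_{\QQ}$. Finally, $B$ is a nonzero scalar multiple of the intersection pairing on $S_0$, which is nondegenerate on $H^{d-1}(S_0, \QQ)_{\mathit{van}}$ by the orthogonal decomposition \eqref{eq:decompS}, so this forces $\psi(x) = 0$ for every $x \in \Nab$, and hence $\psi = 0$.

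The only point requiring care is the invocation of irreducibility of $V_{\QQ}$ as a $\Gamma$-module; once that is granted, the rest is a short formal manipulation. This irreducibility is already used in the discussion immediately preceding the lemma, so I will simply cite it rather than reproving it. The argument then runs no obstacle.
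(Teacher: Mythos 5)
Your proof is correct and is essentially the paper's own argument: the same computation $\Pair{\psi(x)}{gv} = \Pair{g^{-1}\psi(x)}{v} = \Pair{\psi(g^{-1}xg)}{v} = 0$, followed by irreducibility of $V_{\QQ}$ to conclude that the orbit of $v$ spans, and nondegeneracy of $B$ on the vanishing cohomology to force $\psi = 0$. Your version merely makes explicit the final nondegeneracy step, which the paper leaves implicit.
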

\begin{proof}
Suppose the element in question is zero. Using the properties of $\psi$, we then have
\[
	\Pair{\psi(x)}{g v} = 
		\Pair{g^{-1} \psi(x)}{v} = \Pair{\psi(g^{-1} x g)}{v} = 0,
\]
for arbitrary $x \in \Nab$. But since $V_{\QQ}$ is an irreducible $G$-module, the 
orbit of $v$ generates $V_{\QQ}$, and so the identity implies that $\psi = 0$.
\end{proof}

As in the introduction, this can again be interpreted in terms of covering spaces.
Let $T$ be the covering space of
$\Psm$ corresponding to the subgroup $N \subseteq G$. Note that this is the smallest
covering space of $\Psm$ on which the local system $(R^{d-1} \pisml \QQ) \van$
becomes trivial. Since $N$ is a normal subgroup, $T$ is connected, and the fiber
over the point $H_0 \in \Psm$ is in bijection with the group $\Gamma$. For every
nonzero vector $v \in V_{\QQ}$, we get a map $H^d(X, \QQ) \prim \to H^1(T, \QQ)$, and
all these maps are injective by Lemma~\ref{lem:N}

\subsection*{Holomorphic disks}

Another question is whether one can find tube classes with additional
properties that still generate the primitive cohomology. For instance, one can
require that the element $g \in G$ be the boundary of a holomorphically immersed disk
in $P$. When the dimension of $X$ is odd, this is true. Indeed, looking at the proof of
Proposition~\ref{prop:group-odd}, we see that it is only necessary to test the
cocycle $\phi \in Z^1 \bigl( G, V_{\QQ} \bigr)$ on products of the form $g_1 \dotsm
g_m$, where each $g_i$ is the Picard-Lefschetz transformation corresponding to a
vanishing cycle in a fixed Lefschetz pencil. Obviously, every such product is
homotopic to the boundary of a holomorphically immersed disk in $\PPn{1}$. In the case of even
$d$, the proof does not allow this stronger conclusion.
\begin{question*}
Let $X$ be of even dimension. Is it true that the primitive cohomology of $X$ can be
generated by tube classes with $g$ equal to the boundary of a holomorphically
immersed disk in $P$?
\end{question*}

\section*{Acknowledgements}

This paper has its origins in a joint project with Herb Clemens, and it was Herb who
suggested the question to me. I take this opportunity to thank him for his generous
support during my time in graduate school, and in particular for many hours of
engaging and useful discussions. Two conversations with Claire Voisin were also very
helpful, and are gratefully acknowledged. Finally, my thanks go to Paul Taylor for
his \LaTeX-package \texttt{diagrams} that was used to typeset the commutative diagrams.


\begin{bibsection}
\begin{biblist}
\bib{Beauville}{article}{
   author={Beauville, Arnaud},
   title={Le groupe de monodromie des familles universelles d'hypersurfaces
   et d'intersections compl\`etes},
   conference={
      title={Complex analysis and algebraic geometry},
      address={G\"ottingen},
      date={1985},
   },
   book={
      series={Lecture Notes in Mathematics},
      volume={1194},
      publisher={Springer},
      place={Berlin},
   },
   date={1986},
   pages={8--18},
}
\bib{Clemens-CY}{article}{
   author={Clemens, Herb},
   title={Moduli schemes associated to $K$-trivial threefolds as gradient
   schemes},
   journal={Journal of Algebraic Geometry},
   volume={14},
   date={2005},
   number={4},
   pages={705--739},
}
\bib{Deligne}{article}{
   author={Deligne, Pierre},
   title={Le groupe fondamental du compl\'ement d'une courbe plane n'ayant
   que des points doubles ordinaires est ab\'elien (d'apr\`es W. Fulton)},
   conference={
      title={Bourbaki Seminar},
		year={1979/80},
   },
   book={
      series={Lecture Notes in Mathematics},
      volume={842},
      publisher={Springer},
      place={Berlin},
   },
   date={1981},
   pages={1--10},
}
\bib{Ebeling}{article}{
   author={Ebeling, Wolfgang},
   title={An arithmetic characterisation of the symmetric monodromy groups
   of singularities},
   journal={Inventiones Mathematicae},
   volume={77},
   date={1984},
   number={1},
   pages={85--99},
}
\bib{Janssen}{article}{
   author={Janssen, W. A. M.},
   title={Skew-symmetric vanishing lattices and their monodromy groups},
   journal={Mathematische Annalen},
   volume={266},
   date={1983},
   number={1},
   pages={115--133},
}
\bib{Nori}{article}{
   author={Nori, Madhav V.},
   title={Algebraic cycles and Hodge-theoretic connectivity},
   journal={Inventiones Mathematicae},
   volume={111},
   date={1993},
   number={2},
   pages={349--373},
}
\bib{Voisin}{book}{
   author={Voisin, Claire},
   title={Th\'eorie de Hodge et g\'eom\'etrie alg\'ebrique complexe},
   series={Cours Sp\'ecialis\'es},
   volume={10},
   publisher={Soci\'et\'e Math\'ematique de France},
   place={Paris},
   date={2002},
   pages={viii+595},
}
\bib{Weibel}{book}{
   author={Weibel, Charles A.},
   title={An introduction to homological algebra},
   series={Cambridge Studies in Advanced Mathematics},
   volume={38},
   publisher={Cambridge University Press},
   place={Cambridge},
   date={1994},
   pages={xiv+450},
}
\end{biblist}
\end{bibsection}

\end{document}